\newcommand\R{{\mathbb{R}}}
\newcommand\F{{\mathbb{F}}}
\newcommand\calF{{\mathcal{F}}}
\newcommand\ord{{\mathrm{ord}}}
\newcommand\Fp{{\mathbb{F}_p}}
\newcommand\Fpd{{\mathbb{F}_p^d}}
\newcommand\Z{{\mathbb{Z}}}
\newcommand\N{{\mathbb{N}}}
\newcommand\Q{{\mathbb{Q}}}
\theoremstyle{plain}
  \newtheorem{theorem}{Theorem}
  \newtheorem{proposition}{Proposition}
  \newtheorem{lemma}{Lemma}
  \newtheorem{claim}{Claim}
\theoremstyle{remark}
\theoremstyle{definition}
\begin{document}

\title{Additive bases in groups}
\begin{abstract}
In this paper, we study the problem of removing an element from an additive basis in a general abelian group. We introduce analogues 
of the classical functions $X, S$ and $E$ (defined in the case of $\N$) and obtain bounds on them. Our estimates on 
the functions $S_G$ and $E_G$ are valid for general abelian groups $G$ while in the case of $X_G$ we show that distinct types of behaviours
may occur depending on $G$.
\end{abstract}

\author[Lambert, L\^e and Plagne]{Victor Lambert, Th\'ai~Ho\`ang~L\^e and Alain Plagne}
\date{\today}

\address{Centre de Math\'ematiques Laurent Schwartz, \'Ecole polytechnique, 91128 Palaiseau Cedex, France}
\email{victor.lambert@ens-cachan.org}
\email{thai-hoang.le@polytechnique.edu}
\email{plagne@math.polytechnique.fr}
\maketitle

\section{Introduction}
\subsection{Background}
Let $(G,+)$ be an abelian semigroup. If $A \subset G$, then for $h$ a positive integer, $hA$ denotes as usual the $h$-fold sumset of $A$ that is, 
the set of sums of $h$ non-necessarily distinct elements of $A$. For two subsets $A,B$ of $G$, we write $A \sim B$ if the symmetric difference of 
$A$ and $B$ is finite. 

In this paper, we are concerned with the notion of \textit{additive basis}. Several related notions should be defined.

We say $A$ is an \textit{exact asymptotic basis} (from now on, we will simply say a \textit{basis}) of order at most $h$ if all but finitely many elements of $G$ can be expressed 
as a sum of \textit{exactly} $h$ elements of $G$, in other words, if $hA \sim G$.
If $h$ is the smallest integer for which this holds, we say that $A$ is a basis of order $h$ and write $\ord^{*}_G(A)=h$. 
If no such $h$ exists, we write $\ord^{*}_G(A)=\infty$.

We say $A$ is a \textit{weak basis} of order at most $h$ if all but finitely many elements of $G$ can be expressed as a sum of \textit{at most} $h$ elements of $G$, 
in other words, if 
\begin{equation}
\label{weak}
\bigcup_{i=1}^h iA \sim G.
\end{equation}
If $h$ is the smallest integer for which this holds, we say that $A$ is a weak basis of order $h$ and write $\ord_G(A)=h$. If no such $h$ exists, we write $\ord_G(A)=\infty$.

Finally, a basis $A$ of order at most $h$ is called \textit{nice} if $hA=G$. A weak basis $A$ of order at most $h$ is called \textit{nice} if \eqref{weak} 
is in fact an equality. 

These notions are related by the following simple observation. Assume that the ambient semigroup $G$ contains the neutral element $0$. 
Then $A$ is a weak basis if and only if $A \cup \{0\}$ is a basis. Furthermore, $\ord_G(A) = \ord^{*}_G(A \cup \{0\})$. 
(Weak) bases are of interest only when $G$ is infinite. On the other hand, nice (weak) bases make sense in any semigroup.

Historically, additive bases have been studied in the case where $G=\N$, the semigroup of nonnegative integers. In the fundamental paper \cite{eg}, 
Erd\H{o}s and Graham studied the following problem (note that the original formulation of Erd\H{o}s and Graham is slightly different, but equivalent, 
see Section \ref{sec:egcriteria}): 
let $A \subset \N$ be a basis and $a \in A$, when is $A \setminus \{ a \}$ a basis? If $A \setminus \{ a \}$ is still a basis, what can we say about its order? 
Since then, this question and related questions have been extensively studied. 
We give here a brief survey of state-of-the-art results in this theme of research. For more detailed accounts, we refer the reader to \cite{p} or \cite{grekos}.

It turns out that there are only finitely many elements $a \in A$ such that
$A \setminus \{ a \}$ is not a basis, which we refer to as \textit{exceptional} elements. An element $a \in A$ which is not exceptional is called \textit{regular}. Let $A^*$ denote the set of all regular elements of $A$. 
Grekos \cite{grekos2} showed that the number of exceptional elements of $A$ can be bounded in terms of $h$ alone, thus we can define the function
\begin{equation} 
\label{eq:def-e}
 E(h) = \max_{hA \sim \N} |A \setminus A^*|.
\end{equation}
By the third author's work \cite{p3}, we have the following asymptotic formula as $h \rightarrow \infty$:
\begin{equation}
\label{eq:res-e}
E(h) \sim 2 \sqrt{\frac{h}{ \log h}}.
\end{equation}

Erd\H{o}s and Graham \cite{eg} showed that when $a$ is regular, the order of $A \setminus \{ a \}$ can be bounded in terms of $h$ alone. Thus we can define the function
\begin{equation} 
\label{eq:def-x}
X(h) = \max_{hA \sim \N} \max_{a \in A^*} \ord^*(A \setminus \{a\}).
\end{equation}
To date, the best upper and lower bounds for $X(h)$ are both due to the third author \cite{p2}, improving earlier works by St\"ohr \cite{stohr}, Grekos \cite{grekos2} 
and Nash \cite{nash} notably. We have
\begin{equation} \label{eq:res-x}
 \left[ \frac{h(h+4)}{3} \right] \leq X(h) \leq \frac{h(h+1)}{2} + \left\lceil \frac{h-1}{3} \right\rceil .
\end{equation}
It is conjectured by Erd\H{o}s and Graham \cite{eg2} that there is a constant $\alpha$ such that $X(h) \sim \alpha h^2$ as $h \rightarrow \infty$, but this remains open. 
The inequalities in  \eqref{eq:res-x} imply that $X(1)=1, X(2)=4$, $X(3)=7$, but even the value of $X(4)$ remains unknown.

In \cite{grekos2, grekos3}, Grekos observed that in examples of bases $A$ of order $h$ that give large values of $X(h)$, there are actually very few elements $a \in A$ such that $\ord^*(A \setminus \{a\}) = X(h)$. 
This led him to introduce the function
\begin{equation} \label{eq:def-s}
 S(h) = \max_{hA \sim \N} \limsup_{a \in A^*} \ord^*(A \setminus \{a\})
\end{equation}
and to conjecture that the order of magnitude of $S$ is smaller than the one of $X$. This was confirmed by
Cassaigne and the third author \cite{eg} who proved that
\begin{equation} \label{eq:res-s}
h+1 \leq S(h) \leq 2h
\end{equation}
for all $h \geq 2$ (evidently, $S(1)=1$). It was also determined that $S(2)=3$. However, the value of $S(3)$ is still unknown. 
It is an open problem which looks already difficult to determine whether there is a constant $\beta$ such that $S(h) \sim \beta h$ 
as $h \rightarrow \infty$. 

\subsection{Our work}
The goal of this paper is to study the analogues of the functions $E, X, S$ defined above when $G$ is an arbitrary infinite abelian group. 
Clearly, this problem makes sense in any semigroup. 
However, the rich structure of a group gives us more tools and flexibility. As such, our results are not generalizations of results in $\N$, but rather their analogues. 
Indeed, many of the results in $\N$ do not apply automatically to $\Z$, and vice versa.

Before studying the problem of removing elements from a basis in a group $G$, it is quite natural to ask if $G$ has any basis of order $\geq 2$ at all. 
We will show that not only has $G$ a basis, but it also has a 
\textit{minimal basis} of any prescribed order. A basis $A$ of order $h$ of $G$ is called minimal if for any $a \in A$, $A \setminus \{ a \}$ is no longer a basis 
of order $h$ (it could be a basis of some larger order). In other words, any element of $A$ is necessary in order for $A$ to be a basis of order $h$.
This result is the content of our first theorem.

\begin{theorem} 
\label{th:main-minimal}
Let $G$ be any infinite abelian group and $h$ be an integer, $h \geq 2$. Then $G$ has a nice minimal basis of order $h$.
\end{theorem}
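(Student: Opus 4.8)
The plan is to construct the basis $A$ by hand and to read off the three required properties directly from the construction. Writing out what must hold, I need a set $A \subseteq G$ with (i) $hA = G$ (niceness), (ii) $(h-1)A \not\sim G$ (so that the order is exactly $h$ and not smaller), and (iii) $h(A \setminus \{a\}) \not\sim G$ for every $a \in A$ (minimality). Since $(A \setminus \{a\}) \subseteq A$ forces $\ord_G^*(A \setminus \{a\}) \ge \ord_G^*(A)$, condition (iii) is exactly the assertion that deleting any single element destroys order $h$. The guiding principle is therefore to build $A$ so that every $a \in A$ is the \emph{unique} way to represent an infinite family of targets using exactly $h$ summands; niceness and the lower bound on the order are then arranged in parallel.

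First I would treat the model case of an infinite cyclic direction. The dichotomy is clean and exhaustive: either $G$ is torsion, or $G$ has an element of infinite order. In the latter case I would anchor everything on a copy of $\Z$. In $\Z$ itself I would take $A$ to consist of a few small generators (say $0$ and $\pm 1$), which adjust by small offsets using few summands, together with a sparse symmetric family of \emph{anchor} elements $\pm c_k$ marching off to $\pm \infty$. The spacing of the $c_k$ is tuned so that (a) every integer is reachable as one anchor plus a bounded offset realized by exactly the remaining $h-1$ summands (giving $hA = \Z$), (b) infinitely many integers sit at the maximal offset and so genuinely need all $h$ summands (giving $(h-1)A \not\sim \Z$), and (c) each anchor $c_k$ is the only element able to reach the block of integers clustered around it in $h$ steps, so removing it loses that whole infinite-in-aggregate family. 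To pass to a general $G$ with an element $g$ of infinite order, I would use $\langle g \rangle \cong \Z$ as a spine, place the anchors along this line, and enlarge the small-generator set to a set of coset representatives (or a generating set of a complementary part) so that every coset is reached within the same budget of $h$ summands.

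The genuinely different case is when $G$ is an infinite \emph{torsion} group, where there is no infinity to escape to: every anchor has finite order, so essentiality cannot be produced by unbounded jumps. Here I would use that a torsion abelian group is the directed union of its finite subgroups, write (in the countable case) $G = \bigcup_n F_n$ as a strictly increasing chain of finite subgroups, and build $A = \bigcup_n B_n$ in telescoping layers. Each layer $B_n$ would supply coset representatives bridging $F_{n-1}$ into $F_n$, chosen so that a fixed new representative is forced to appear in every exactly-$h$-term representation of all but finitely many elements lying in the new cosets at every higher level; the order is kept from dropping below $h$ by spreading the reach across $h$ layers at a time. The cardinality issue for uncountable $G$ I would handle by running the same layered construction along a well-ordered exhaustion of $G$ by subgroups of strictly increasing size, so that $|A| = |G|$ as is necessary for $hA = G$.

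The main obstacle is precisely this torsion regime (and, on top of it, the uncountable one), where niceness, exact order $h$, and global minimality pull against one another. Covering \emph{every} element with exactly $h$ summands forces $A$ to be large and dense, yet minimality demands that each of these $|G|$ many elements be indispensable for an infinite family of targets — a tension trivially resolved in $\Z$ by sending anchors to infinity, but which must be engineered layer by layer from finite data when $G$ is torsion. I expect the bulk of the work, and the only real subtlety, to lie in choosing the layer maps $B_n$ so that deleting any single designated element breaks order $h$ for infinitely many targets while the surviving elements still reconstruct all of $G$ in exactly $h$ steps.
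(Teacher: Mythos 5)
Your high-level plan follows the same dichotomy as the paper (an element of infinite order versus torsion, with a chain of finite subgroups in the torsion case), but it has a genuine gap: the mechanism that would simultaneously deliver niceness, exact order $h$, and minimality is never constructed — it is only postulated. Both in the anchor construction (``the spacing of the $c_k$ is tuned so that (a), (b), (c)'') and in the torsion case (``chosen so that a fixed new representative is forced to appear in every exactly-$h$-term representation''), the properties you need are precisely the statements to be proved, and you yourself concede that this is ``the bulk of the work.'' The missing idea, which is the heart of the paper's proof, is a \emph{unique-representation system}: Proposition \ref{prop:digits} shows that if $G$ decomposes as a ``direct sum'' of sets $\Lambda_i$ with $0 \in \Lambda_i = -\Lambda_i$, i.e.\ every $x \in G$ is \emph{uniquely} a finite sum $x = \sum_i \lambda_i(x)$ with $\lambda_i(x) \in \Lambda_i$, then one partitions the index set into $h$ infinite classes $N_1,\ldots,N_h$, lets $A_j$ be the elements supported on $N_j$, and takes $B = \bigl(\bigcup_j A_j\bigr) \setminus \{0\}$. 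Uniqueness of the expansion instantly gives that any $x$ with all $h$ components nonzero needs exactly $h$ summands (order exactly $h$, and minimality: removing any $a$ kills the infinitely many $x$ whose unique representation uses $a$), while the symmetric-digit trick $a = (a+\lambda) + (-\lambda)$ restores niceness after deleting $0$. The two cases of the dichotomy then only serve to exhibit such a system: balanced ternary digits $\Lambda_i = \{0, \pm 3^{i-1}\}$ along a copy of $\Z$ together with coset representatives of $G/\Z$ as the $0$-th digit, or coset representatives along a countable chain of finite subgroups together with representatives of $G/\bigcup_i G_i$.

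Without that uniqueness mechanism, the concrete steps you describe break down. In the infinite-order case, adjoining coset representatives of $G/\Z$ as \emph{separate} elements of $A$ does not control anything: the sum of two representatives lies in a coset having its own representative, so representations are far from unique across cosets, the summand budget per coset is not ``the same budget of $h$,'' and essentiality of the representatives is not ensured; the paper avoids this exactly by fusing the representative into the digit expansion of a single basis element rather than adding it as a new summand. Likewise, keeping $0$ among your ``small generators'' conflicts with minimality (you must show $0$ itself is indispensable for infinitely many targets, which fights against niceness via exactly $h$ summands). Finally, in the uncountable torsion case no transfinite exhaustion is needed: a single countable strictly increasing chain of finite subgroups plus one set of representatives of the (possibly uncountable) quotient $G/\bigcup_i G_i$ already yields the unique-representation system. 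So the proposal is a reasonable research plan pointed in the right direction, but the actual proof content — the construction whose uniqueness property makes (i), (ii), (iii) fall out — is absent.
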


We can now talk about the analogues of the functions $E,X,S$ defined on $\N$. Let $A$ be a basis of $G$. 
An element $a \in A$ is called \textit{exceptional} if 
$A \setminus \{ a \}$ is not a basis of $G$ of any order, and \textit{regular} if it is not exceptional. Let $A^{*}$ be the set of regular elements of $A$.
We show that similarly to the case of $\N$, there are only finitely many exceptional elements in $A$. More precisely, define
\[
 E_G(h) = \max_{hA \sim G} |A \setminus A^{*}|
\]
A priori, it is not clear that this function is well-defined (i.e., the number of exceptional elements of $A$ can be bounded in terms of $h$ alone). However, we will prove the following result.

\begin{theorem} 
\label{th:main-e}
\begin{enumerate}[(i)]
\item \label{th:main-ea} For any infinite abelian group $G$ and any integer $h \geq 2$, we have
\[
 E_G(h) \leq h-1.
\]
\item \label{th:main-eb}  There is an infinite group $G$, for which $E_G(h)=h-1$ for any integer $h \geq 2$.
\item \label{th:main-ebsecond} For each integer $h \geq 2$, there is an infinite group $G$ (depending on $h$) for which $E_G(h)=0$.
\end{enumerate}
\end{theorem}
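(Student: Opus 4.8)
The plan is to reduce all three parts to a single structural principle: for a basis $A$ of $G$, an element $a$ is regular if and only if $A\setminus\{a\}$ still generates $G$ \emph{as a group}. Granting this, (i) becomes a counting statement about irredundant generators, (ii) an explicit construction, and (iii) a non-existence statement, all read off in a suitable quotient of $G$. First I would normalise: translating $A$ by a fixed element of $A$ preserves $\ord^{*}_G$, cofiniteness of $hA$, and the basis property of every $A\setminus\{a\}$, so I may assume $0\in A$. Then $0$ is automatically regular, and for $a\ne 0$ the set $A':=A\setminus\{a\}$ contains $0$, the sumsets $kA'$ are nested, and $A'$ is a basis iff the monoid $M':=\bigcup_k kA'$ is cofinite. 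The principle then reads: if $\langle A'\rangle=G$ then $A'$ is a basis.

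I expect this last implication to be \emph{the main obstacle}, since in a group ``generates'' is strictly weaker than ``is a basis'' (think of a one-sided submonoid). Here is the trick I would use. Since $M'-M'=\langle A'\rangle=G$, write $a=p_0-q_0$ with $p_0,q_0\in M'$. For every $g$ in the cofinite set $hA$, expand $g$ as an $h$-term sum using $a$ exactly $t\le h$ times, so $g=ta+m$ with $m\in M'$; substituting $a=p_0-q_0$ gives $g+tq_0=tp_0+m\in M'$, hence $g\in\bigcup_{t=0}^{h}(M'-tq_0)$. Thus this finite union of translates of $M'$ is cofinite. Finally $q_0\in M'$ forces $M'+q_0\subseteq M'$, so the complement $C=G\setminus M'$ satisfies $C-q_0\subseteq C$; the sets $C-tq_0$ are therefore nested and the complement of the union equals $C-hq_0$. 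As this is finite, so is $C$, and $A'$ is a basis. With the trivial converse (a basis generates $G$) this gives: $a$ is exceptional $\iff\langle A\setminus\{a\}\rangle\ne G$.

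For (i), let $a_1,\dots,a_m$ be the exceptional elements, set $A_0=A\setminus\{a_1,\dots,a_m\}$ (infinite, containing $0$), $H=\langle A_0\rangle$, and pass to $\bar G=G/H$ with $\bar a_i$ the image of $a_i$. Exceptionality gives $\bar a_i\notin L_i:=\langle\bar a_j:j\ne i\rangle$, while $\langle A\rangle=G$ gives $\bar G=\langle\bar a_1,\dots,\bar a_m\rangle$. Because $H$ is infinite, every coset of $H$ meets the cofinite set $hA$ infinitely often; a representation using all $h$ summands among the $a_i$ contributes only one point, so each $\bar v\in\bar G$ is a non-negative combination $\sum_i c_i\bar a_i$ with $\sum_i c_i\le h-1$ (at least one free slot, filled from the infinite set $A_0$). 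Apply this to $\bar v=-\sum_i\bar a_i$: then $\sum_i(c_i+1)\bar a_i=0$, and projecting to the cyclic group $\bar G/L_j$, in which only $\bar a_j$ survives with some finite order $n_j\ge 2$, yields $n_j\mid c_j+1$, so $c_j\ge 1$ for every $j$. Summing, $m\le\sum_i c_i\le h-1$.

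For (ii) and (iii) I would pass to torsion groups, where $-x=(\ord(x)-1)x$ shows $M_{A'}=\langle A'\rangle$, so the principle becomes $a$ exceptional $\iff a\notin\langle A\setminus\{a\}\rangle$. For (ii) take $G=\bigoplus_{\N}\F_2$; for each $h$ let $W=\bigoplus_{i\ge h}\F_2$ and $A=\{f_1,\dots,f_{h-1}\}\cup W$ with $f_i$ the coordinate vectors. One checks $hA=G$ (use $v_i\in\{0,1\}$ copies of each $f_i$ and the $\ge 1$ remaining slots to realise any $W$-part), while $f_i\notin\langle A\setminus\{f_i\}\rangle$; this gives $h-1$ exceptional elements, so $E_G(h)=h-1$ by (i), and the \emph{same} group works for every $h$. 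For (iii), fix $h$ and take $G=\bigoplus_{\N}\F_p$ with a prime $p\ge h+2$. If some $A$ with $hA\sim G$ had an exceptional $a$, then $W=\langle A\setminus\{a\}\rangle$ would be a subspace of index $p$ with $a\notin W$, so every $h$-fold sum has $G/W$-component in $\{j\bar a:0\le j\le h\}$, which omits residues because $h+1<p$; thus $hA$ misses an entire infinite coset, contradicting cofiniteness. Hence $E_G(h)=0$. Existence of an order-$h$ basis in each case is guaranteed by Theorem~\ref{th:main-minimal}.
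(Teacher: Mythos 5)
Your whole plan funnels through the principle ``$a$ is regular $\iff \langle A\setminus\{a\}\rangle=G$'', together with the assertion that once $0\in A$, the element $0$ is automatically regular. Both statements are false, and the failure is exactly why the paper's criterion (Lemma~\ref{lem:egcrit2}) is phrased with the difference set $\langle A\setminus\{a\}-A\setminus\{a\}\rangle$ rather than with $\langle A\setminus\{a\}\rangle$: take $G=\Z$ and $A=\{0\}\cup(2\Z+1)$; then $2A=\Z$, so $A$ is a basis of order $2$, and $A\setminus\{0\}$ (the odd numbers) generates $\Z$, yet $k(A\setminus\{0\})$ lies in a single class mod $2$ for every $k$, so $0$ is exceptional. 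Your proof of the principle correctly assumes $0\in A$ and $a\ne 0$ (in that range it is equivalent to the paper's lemma), but you invoke the unrestricted version at two load-bearing points. In (i), if $0$ is exceptional then $0\in\{a_1,\dots,a_m\}$, the relation $\bar a_i\notin L_i$ is simply false for $a_i=0$, and your count only yields $m\le h$. (Repair: argue by contradiction with $h$ exceptional elements, translate by some $c\in A$ outside this chosen set, and run the count on these $h$ elements only; it is harmless that $A_0$ may then contain further exceptional elements.) In (iii), ``$a$ exceptional $\Rightarrow W=\langle A\setminus\{a\}\rangle$ proper'' is precisely the false direction: with $K=\langle e_2,e_3,\dots\rangle\le\bigoplus_{\N}\F_3$, the set $A=\{0\}\cup(e_1+K)$ is a basis of order $3$ in which $0$ is exceptional although $\langle A\setminus\{0\}\rangle=G$. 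So this implication cannot be used as a lemma; the correct route (essentially the paper's Proposition~\ref{prop:efpt}) is to quotient by $D=\langle A\setminus\{a\}-A\setminus\{a\}\rangle$, note that $A\setminus\{a\}$ maps to a single point, so the image of $hA$ in $G/D$ has at most $h+1$ elements, whence $|G/D|\le h+1<p$; being a power of $p$, $|G/D|=1$, contradicting $D\ne G$.

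There is a second gap inside the proof of the principle itself: the equivalence ``$A'$ is a basis iff $M'=\bigcup_k kA'$ is cofinite'' fails from right to left. In $\bigoplus_{\N}\F_2$ the set $A'=\{0,e_1,e_2,\dots\}$ has $M'=G$, but $kA'$ is the set of vectors of weight at most $k$ and is never cofinite; so your final step ``$C$ is finite, hence $A'$ is a basis'' does not follow --- cofiniteness of the infinite union gives no bound on $\ord^*_G(A')$, and producing a finite bound is the entire content of such a criterion. This gap is repairable with your own ingredients: $p_0\in k_1A'$ and $q_0\in k_2A'$ for some fixed $k_1,k_2$, so for every $g\in hA$ the identity $g+hq_0=tp_0+m+(h-t)q_0$ places $g+hq_0$ in $h(k_1+k_2+1)A'$ (pad with $0\in A'$), and since cofiniteness is translation-invariant this gives $\ord^*_G(A')\le h(k_1+k_2+1)$; this finite bookkeeping is what the paper's Lemma~\ref{lem:egcrit1} performs via its intersection lemma. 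With these two repairs your argument does close, and part (i) then becomes a genuinely different proof from the paper's (which cancels terms in one cleverly chosen representation rather than counting in a quotient); your part (ii) is sound as written, since exceptionality of the $f_i$ there needs no criterion at all.
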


The statements \eqref{th:main-eb} and \eqref{th:main-ebsecond} in this theorem show that, 
as far as general groups are concerned, the upper bound \eqref{th:main-ea} is the best possible one can hope for. Also, it is easy to see that $E_G(1)=0$ for any $G$.

Next we turn to the question of bounding the order of $A\setminus \{a\}$ when $a \in A$ is a regular element. Define
\begin{equation} \label{eq:defx1}
 X_G(h) = \max_{hA \sim G} \max_{a \in A^*} \ord_G^*(A\setminus \{ a \}).
\end{equation}
Studying the function $X_G$ turns out to be less successful than $E_G$. 
We do not even know if $X_G(h)$ is finite for each $G$, 
not to mention the problem of proving that $X_G(h)$ can be bounded in terms of $h$ alone. 
However, in the case of some particular groups we are able to prove bounds for $X_G(h)$. 

In order to state our next result, we shall need the arithmetic function $\Omega$. Recall that it is defined by 
\begin{equation} 
\label{eq:omega}
\Omega(n)=\alpha_1 + \cdots + \alpha_k,
\end{equation}
if $n=p_1^{\alpha_1} \cdots p_k^{\alpha_k}$ is the prime factorization of $n$. 

Another notation that we shall need is for $A$ a subset of a group $G$ and $m$ an integer
 \[
m\cdot A = \{ma : a \in A\}.
\]

By adapting Erd\H{o}s-Graham's argument from \cite{eg}, we prove the following statement.

\begin{theorem} \label{th:main-x1}
Let $G$ be an infinite abelian group. If for any integer $1 \leq m \leq h$, the quotient group $G / m \cdot G$ is finite 
then we have
\begin{equation} 
\label{eq:res-x2}
X_G(h) \leq h^2 + h \cdot \max_{1 \leq m \leq h} \Omega \left( \left| G/m \cdot G \right| \right) +h-1. 
\end{equation}
\end{theorem}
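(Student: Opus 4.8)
The plan is to start from a basis $A$ with $hA \sim G$ together with a regular element $a \in A$, set $B := A \setminus \{a\}$, and bound $\ord_G^*(B)$ by exhibiting a single $k$, equal to the right-hand side of \eqref{eq:res-x2}, for which $kB \sim G$. A useful preliminary observation is that one may inflate the order freely: fixing any $a_0 \in A$, for every $\ell \ge h$ and cofinitely many $g$ one has $g - (\ell - h) a_0 \in hA$, whence $g \in (\ell - h) a_0 + hA \subseteq \ell A$, so $\ell A \sim G$ for all $\ell \ge h$. Writing $A = B \cup \{a\}$ and expanding $hA = \bigcup_{j=0}^{h} (ja + (h-j)B)$, it follows that cofinitely many $g \in G$ can be written as $g = ja + s$ with $0 \le j \le h$ and $s$ a sum of exactly $h-j$ elements of $B$. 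Thus it suffices to represent each of the $h+1$ elements $ja$ $(0 \le j \le h)$ as a sum of elements of $B$, and then to reconcile the resulting lengths into one common value $k$.

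The heart of the matter is to control, for each $1 \le j \le h$, the number of elements of $B$ needed to produce $ja$. Here I would bring in the hypothesis through the finite quotients $Q_m := G / m \cdot G$. Since $B$ is a basis it generates $G$, so the image of $B$ generates the finite abelian group $Q_m$; because any strictly increasing chain of subgroups in a finite abelian group of order $N$ has length at most $\Omega(N)$, a greedy selection yields at most $\Omega(|Q_m|)$ elements of $B$ whose images already generate $Q_m$. Taking $m = j$ and noting $ja = j \cdot a \in j \cdot G$, so that $\overline{ja} = 0$ in $Q_j$, these few generators let one build a sum $\sigma$ of a controlled number of elements of $B$ with $\sigma \equiv ja \pmod{j \cdot G}$. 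This is precisely the mechanism producing the factor $\max_{1 \le m \le h} \Omega(|G / m \cdot G|)$, multiplied by $h$ because up to $h$ copies of $a$ must be cleared.

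Once the class of $ja$ is matched in $Q_j$, the leftover discrepancy $ja - \sigma$ lies in $j \cdot G$, and this is where I would exploit the group structure — in particular the availability of subtraction, which has no counterpart in $\N$ — to pass from a representation modulo $j \cdot G$ to a genuine identity $ja = \sum b_i$ over $B$. Concretely, a signed representation $ja = P - N$ with $P$ and $N$ honest sums of elements of $B$ must be converted into an unsigned one; I would clear the negative part $N$ by absorbing it into the padding furnished by $\ell A \sim G$ for all $\ell \ge h$, which in turn supplies, for a single element, two $B$-representations of different lengths and hence the freedom to lengthen any given representation up to a prescribed total.

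The main obstacle, in my view, is not the existence of \emph{some} bounded representation of each $ja$, which the chain-length estimate in $Q_j$ secures, but rather the uniformity: one must force all $h+1$ cases $0 \le j \le h$ to terminate at the \emph{same} length $k$, and then verify that the bookkeeping of the three contributions — roughly the $h - j$ carried elements of $B$, the $O(j)$ generators coming from the composition length of $Q_j$, and the quadratic cost of clearing up to $h$ copies of $a$ — sums to exactly $h^2 + h \max_{1 \le m \le h} \Omega(|G / m \cdot G|) + h - 1$. I would therefore devote most of the effort to this length-reconciliation step, tracking constants carefully, since the finiteness of the quotients $G / m \cdot G$ enters only to guarantee that each individual representation exists with the stated $\Omega$-bound.
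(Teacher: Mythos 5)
Your plan correctly isolates the two ingredients that also power the paper's proof --- the finite quotients $G/m\cdot G$ with the subgroup--chain bound $\Omega(|G/m\cdot G|)$, and the need for a length-reconciliation device --- but the reconciliation step, which you yourself identify as the crux, is left unsolved, and each mechanism you sketch for it fails. First, your use of the quotient is vacuous where you invoke it: since $\overline{ja}=0$ in $Q_j=G/j\cdot G$, the empty sum already satisfies $\sigma\equiv ja \pmod{j\cdot G}$, so the $\Omega(|Q_j|)$ generators buy nothing there; the entire difficulty is the discrepancy $ja-\sigma$, a general element of $j\cdot G$, and for such elements the only bounded-length representations available come from $A$ (via $j\cdot(iA)\subset (ji)A$), i.e.\ they may use the removed element $a$. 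Second, the padding ``furnished by $\ell A\sim G$ for all $\ell\ge h$'' has the same defect: it produces $A$-representations, and extracting from them two pure $B$-representations of a common element with lengths differing by a bounded amount is exactly what is not established (two representations in $\ell A$ and $\ell' A$ need not use the same number of copies of $a$). Third, the fallback that some element has two $B$-representations of different lengths is true --- by Lemma \ref{lem:whole}, any element of $kB$ with $k=\ord_G^*(B)$ lies in $kB\cap(k+1)B$ --- but the lengths involved are controlled only by $\ord_G^*(B)$, which is precisely the quantity you are trying to bound, so this is circular. Without a padding device whose parameters are bounded in terms of $h$ and the $\Omega$-quantity alone, knowing that each $ja$ has \emph{some} bounded-length signed or modular representation cannot be upgraded to a single $k$ with $kB\sim G$.

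The paper escapes this trap by a reduction you do not make: by Lemma \ref{lem:X=g} ($X_G=x_G$), one may translate by the removed element and instead bound $\ord_G^*(A)$ for a \emph{weak} basis $A$ of order at most $h$ with $\langle A-A\rangle=G$. After this reduction there is no forbidden element, so all auxiliary representations may be taken inside $A$ itself, and the circularity disappears. Concretely, the paper (i) pigeonholes $(h+1)A\subset\bigcup_{i=1}^h iA\sim G$ to get $c\in nA\cap(h+1)A$ for some $n\le h$, and sets $m=h+1-n$; (ii) applies Lemma \ref{lem:intersection} to place the single element $(h-1)c$ in every sumset $((h-1)n+im)A$, $0\le i\le h-1$, which is the bounded length-flexibility in steps of $m$; (iii) notes that $m\cdot G$ is cofinitely covered by $\bigcup_{i=1}^h (mi)A$, whose lengths are multiples of $m$, so adding $(h-1)c$ pushes all of $m\cdot G$ (cofinitely) into the single sumset $((h-1)n+hm)A$; and (iv) uses Lemma \ref{lem:torsion} --- your greedy-chain argument, but strengthened to give representations of \emph{exact} length $sm$, using that $0=m\bar a$ in the $m$-torsion quotient --- to cover a full system of representatives of $G/m\cdot G$, which handles general $x\in G$ rather than the multiples $ja$. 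Steps (i)--(iii) constitute exactly the non-circular padding device your proposal is missing, and step (iv) shows where the quotient hypothesis genuinely enters; as written, your argument does not close, and repairing it would in effect mean rebuilding the paper's proof in the less convenient formulation.
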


Groups that satisfy the hypothesis of Theorem \ref{th:main-x1} include large classes such as finitely generated groups, 
divisible groups (i.e., groups $G$ such that $m \cdot G = G$ for all $m \in \Z^+$, which include $\R$ and $\Q$) and $\Z_p$ (the $p$-adic integers). 

As for lower bounds, 
the same as in \eqref{eq:res-x} applies to groups which have $\Z$ as a quotient. That is, we can prove the following.

\begin{theorem} \label{prop:x-lower}
Let $G$ be an infinite abelian group.
Suppose there is a subgroup $H$ of $G$ such that $G/H \cong \Z$, then for any integer $h \geq 1$, we have
\[
X_{G}(h)\ge \left[\frac{h(h+4)}{3}\right].
\]
\end{theorem}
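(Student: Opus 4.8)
The plan is to reduce the statement over $G$ to the analogous lower bound over $\Z$, and then to import the extremal construction underlying \eqref{eq:res-x}. Write $L = \left[\frac{h(h+4)}{3}\right]$. Since $\Z$ is a free abelian group, the short exact sequence $0 \to H \to G \to G/H \cong \Z \to 0$ splits, so I may assume $G = H \oplus \Z$ and let $\pi : G \to \Z$ denote the projection onto the second coordinate. The key structural observation is the following transfer principle: if $A \subseteq G$ is such that $a_0 \in A$ is the \emph{unique} element of $A$ lying in its fibre $\pi^{-1}(\pi(a_0))$, then $\pi\big(k(A \setminus \{a_0\})\big) = k\big(\pi(A) \setminus \{\pi(a_0)\}\big)$ for every $k$, and since $\pi$ is surjective, $k(A \setminus \{a_0\}) \sim G$ forces $k\big(\pi(A)\setminus\{\pi(a_0)\}\big) \sim \Z$. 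Taking contrapositives level by level yields $\ord_G^*(A \setminus \{a_0\}) \ge \ord_\Z^*\big(\pi(A) \setminus \{\pi(a_0)\}\big)$. Thus it suffices to build $A$ whose projection $C := \pi(A)$ is a basis of $\Z$ witnessing $X_\Z(h) \ge L$, with the deleted element sitting alone over its level.

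Second, I would establish the $\Z$-analogue $X_\Z(h) \ge L$ by adapting the construction of \cite{p2}. Let $C \subseteq \Z$ be a basis of order at most $h$ and $c_0 \in C$ a regular element with $\ord_\Z^*(C \setminus \{c_0\}) \ge L$, obtained by mirroring the extremal basis of $\N$. I then lift $C$ to $G$ by setting $a_0 = (0, c_0)$ and adjoining, over every level $c \in C \setminus \{c_0\}$, the entire coset $H \times \{c\}$; that is, $A = \{(0,c_0)\} \cup \bigcup_{c \in C \setminus \{c_0\}} (H \times \{c\})$. By construction $\pi(A) = C$ and $a_0$ is the unique element of $A$ over $c_0$, so the transfer principle applies. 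Since every level of $hC = \Z$ admits a representation using some $c \ne c_0$, the free $H$-component carried by the adjoined cosets lets us realise any $(t,m) \in H \oplus \Z$; the same argument applied to $C \setminus \{c_0\}$ shows that $a_0$ is regular in $A$. Combining this with the transfer principle gives $\ord_G^*(A \setminus \{a_0\}) \ge \ord_\Z^*(C \setminus \{c_0\}) \ge L$, as required.

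The main obstacle is the passage from $\N$ to $\Z$ in the construction of $C$. Unlike $\N$, the group $\Z$ is unbounded below, so any basis must contain arbitrarily negative elements; the danger is that a hard-to-represent positive integer $n$ (one forcing the order $L$ after deleting $c_0$ in the $\N$ model) could acquire a short representation in $\Z$ of the mixed form $n = (\text{sum of few elements of } C\setminus\{c_0\}) - (\text{sum of few negative elements})$, thereby collapsing the order. Ruling this out requires choosing the negative part of $C$ so sparsely, or in such a controlled residue pattern, that no such cancellation shortens the representation of the extremal integers; verifying this is the technical heart of the argument. A secondary point, relevant only when $H$ is infinite, is that the adjoined cosets force us to work with \emph{nice} bases: since finitely many uncovered levels already correspond to infinitely many uncovered elements of $G$, one must arrange that both $C$ and $C \setminus \{c_0\}$ have full sumsets at their respective orders, an additional constraint to be carried through the construction.
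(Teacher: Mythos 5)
Your reduction machinery (splitting $G \cong H \oplus \Z$, the fibre-uniqueness transfer principle, the coset lifting) is essentially sound, but the proposal has a genuine gap at its core: everything funnels into the claim that there exists a basis $C$ of $\Z$ of order at most $h$ and a regular element $c_0$ with $\ord_\Z^*(C \setminus \{c_0\}) \geq \left[\frac{h(h+4)}{3}\right]$, strengthened by the niceness requirements you yourself identify as unavoidable when $H$ is infinite (every integer must be representable by $h$ elements of $C$ not all equal to $c_0$, and $C \setminus \{c_0\}$ must have full sumset at its exact order). You never establish this; you explicitly defer it as ``the technical heart of the argument.'' The known bound \eqref{eq:res-x} is a theorem about $\N$, and, as this paper stresses, results in $\N$ do not transfer automatically to $\Z$ --- your own discussion of cancellation by negative elements is exactly the obstruction. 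So the proposal reduces the theorem to an unproved statement about $\Z$ that is not visibly easier than the theorem itself, and the regularity of $a_0$ in $A$ (needed even to invoke definition \eqref{eq:defx1}) hangs on the same unproved construction.

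The paper's proof avoids $\Z$ entirely, and this is precisely what makes it work. It invokes Theorem 20 of \cite{p2}: for $g = \left[\frac{h(h+4)}{3}\right]+1$ and $k=g-1$ there is a \emph{two-element} set $A \subset \Z/g\Z$ with $A \cup 2A \cup \cdots \cup hA = \Z/g\Z$, $(k-1)A \neq \Z/g\Z$ and $kA = \Z/g\Z$. Since $G/H \cong \Z$ surjects onto $\Z/g\Z$, so does $G$; taking $B$ to be the \emph{full preimage} of $A$ in $G$, Lemma \ref{lem:lifting} transports all three statements verbatim (this is where niceness is decisive: the lemma lifts genuine equalities through quotients, whereas your mixed construction --- a singleton over $c_0$, full cosets elsewhere --- is exactly what breaks the fibrewise covering when $H$ is infinite). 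Hence $\ord_G^*(B) = k$ while $B$ is a nice weak basis of order at most $h$, and $X_G(h) = x_G(h) \geq k$ by Lemma \ref{lem:X=g} and definition \eqref{eq:defx2} --- no element is deleted, no regularity needs checking, and no splitting of the extension is used. If you want to salvage your approach, the fix is the same move: replace your target group $\Z$ by the finite quotient $\Z/g\Z$, where the extremal nice weak basis is already known, rather than trying to rebuild Plagne's extremal example inside $\Z$.
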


This prompts one to believe that the growth of $X_G(h)$ in general is quadratic. 
However, we show that this is not the case by exhibiting another class of groups for which $X_G(h)$ has a \textit{linear} growth.

\begin{theorem} \label{th:main-x2}
Let $p$ be a prime number and $G$ be an infinite abelian group with the property that every nonzero element of $G$ has order $p$.
\begin{enumerate}[(i)]
\item \label{th:main-x2a} For any integer $h\geq p$, we have
\[
 X_G(h) \leq ph + p-1.
\]
\item \label{th:main-x2b} For any integer $h \geq 3(p-1)/2$, we have
\[
 X_G(h) \geq 2h - 3p +3.
\]
\end{enumerate}
In particular, if $p=2$, then $X_G(h) \sim 2h$ as $h \rightarrow \infty$.
\end{theorem}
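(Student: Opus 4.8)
Write $B=A\setminus\{a\}$; since $a$ is regular, $B$ is a basis of $G$ of some finite order, so in particular $\langle B\rangle=G$. The plan for part (i) is to pass to the quotient $\bar G=G/\langle a\rangle$, where $\langle a\rangle$ is cyclic of order $p$. Since the fibres of the projection $\pi\colon G\to\bar G$ all have size $p$, the image of a cofinite set is cofinite and the preimage of a cofinite set is cofinite; from $hA\sim G$ we therefore get $h\bar A\sim\bar G$, where $\bar A=\pi(A)=\pi(B)\cup\{0\}$. By the observation recalled in the introduction ($C$ is a weak basis if and only if $C\cup\{0\}$ is a basis, with matching orders), this says exactly that $\pi(B)$ is a weak basis of $\bar G$ of order at most $h$, i.e.\ $\bigcup_{i=1}^{h} i\,\pi(B)\sim\bar G$. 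Lifting, for all but finitely many $g\in G$ we may write
\[
g=b_1+\cdots+b_i+t a,\qquad 1\le i\le h,\ 0\le t\le p-1,
\]
with $b_1,\dots,b_i\in B$; thus, modulo the correcting term $ta$, every such $g$ already has a $B$-representation of length at most $h$.

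It then remains to absorb $ta$ into a $B$-sum of controlled length and to make the total length a single admissible value $h'\le ph+p-1$. Since $hA\sim G$ and $pa=0$, for a generic $\bar g$ all $p$ lifts $g,g+a,\dots,g+(p-1)a$ lie in $hA$; subtracting two such length-$h$ representations and reducing the copies of $a$ modulo $p$ expresses each nonzero multiple $sa$ ($1\le s\le p-1$) as a sum of $O(h)$ elements of $B$. Substituting the representation of $ta$ into the display above yields a $B$-representation of $g$. To funnel all of these to one fixed exact order I would use two moves: padding by $p\cdot b^{*}=0$ for a fixed $b^{*}\in B$ (which shifts the length by multiples of $p$), and one application of a relation among elements of $B$ whose number of terms is \emph{not} divisible by $p$ — such a relation must exist, because $B$ cannot be $\mathbb{F}_p$-linearly independent (otherwise each $tB$ would sit inside a single coset of an index-$p$ subgroup and could not be cofinite, contradicting that $B$ is a basis). \emph{The main obstacle here is quantitative:} making the bound on the length of the corrections $ta$, together with the residue adjustment, tight enough to land at or below $ph+p-1$ rather than at some weaker multiple of $h$. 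This is exactly where the hypothesis $h\ge p$ and a careful choice of the two lifts being subtracted should be used.

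For part (ii) I would argue by an explicit construction, which suffices because $G$ is an infinite $\mathbb{F}_p$-vector space and any construction carried out on a countable-dimensional summand extends to $G$ by adjoining a spanning set of a complement. Fixing an $\mathbb{F}_p$-basis $(e_i)$, I would take for $a$ a vector whose support has size about $h$, and let $B$ consist of vectors of small support (so that a Hamming-weight count forces the minimal length of a $B$-representation of $a$ to be large) together with auxiliary high-support vectors placed on disjoint coordinates, arranged so that $A=B\cup\{a\}$ is a basis of order exactly $h$ with $a$ regular. The weight obstruction then shows that the designated hard elements $w$, namely those whose only short $A$-representations use $a$, require at least $2h-3p+3$ summands from $B$ alone, giving $\ord^{*}_G(A\setminus\{a\})\ge 2h-3p+3$. \emph{The hard direction} is to certify simultaneously that adjoining $a$ really brings the order down to $h$ (an upper bound on $\ord^{*}_G(A)$) and that its removal forces order at least $2h-3p+3$ (a lower bound on $\ord^{*}_G(B)$); the threshold $h\ge 3(p-1)/2$ is precisely what keeps the base case of the construction consistent, since the claimed bound degenerates to $0$ there.

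Finally, specialising to $p=2$, parts (i) and (ii) give $2h-3\le X_G(h)\le 2h+1$ for all $h\ge 2$, whence $X_G(h)\sim 2h$ as $h\to\infty$.
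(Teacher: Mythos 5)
There are genuine gaps in both parts of your proposal, and they are not merely cosmetic. For part (i), your route (quotient by $\langle a\rangle$, lift representations, then re-absorb the correction terms $ta$) stalls exactly where you say it does, and the obstacle is structural, not just technical. First, to write $sa$ ($1\le s\le p-1$) as a $B$-sum you subtract two length-$h$ representations of lifts of a generic element; in a sumset setting subtraction must be simulated by $-x=(p-1)x$, and those representations may themselves contain copies of $a$, so this step alone costs on the order of $ph$ summands, putting the total near $ph+h$ and above the target $ph+p-1$. Second, your residue-adjusting relation $\sum_j c_j b_j=0$ with $\sum_j c_j\not\equiv 0 \pmod p$ does exist (the correct justification is that otherwise the augmentation map $\sum_j c_j b_j\mapsto \sum_j c_j \bmod p$ would be a well-defined homomorphism onto $\Z/p\Z$ trapping each $tB$ inside a single coset, contradicting $hB\sim G$; mere linear dependence is not enough), but its length depends on $B$ and is not bounded in terms of $h$ and $p$. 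Since $X_G(h)$ is a maximum over \emph{all} bases $A$ and regular $a$, invoking an uncontrolled relation destroys precisely the uniformity the theorem asserts. The paper's proof avoids the quotient by $\langle a\rangle$ altogether: using the equivalent definition $x_G$ (Lemma \ref{lem:X=g}), it exploits the inclusion $nA\subset (n+p)A$ (valid since $px=0$) to get $\bigcup_{i=h-p+1}^{h}iA\sim G$, and then splits into two cases: if $(h+2)A$ is disjoint from every $nA$ with $h-p+3\le n\le h+1$, then $(h-p+2)A=(h+2)A$ and iterating $+pA$ gives order at most $h$; otherwise Lemma \ref{lem:intersection}, combined with the fact that $\{im\}_{i=0}^{p-1}$ is a complete residue system modulo $p$ because $(m,p)=1$, places all but finitely many $x$ in $(hp+p-1)A$.

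For part (ii), what you have is a plan whose two key claims — that the constructed $A$ is a basis of order at most $h$ with $a$ regular, and that $\ord^*_G(A\setminus\{a\})\ge 2h-3p+3$ — are exactly the content of the statement and remain unverified; the Hamming-weight sets are never specified, so there is nothing to check. The paper instead works in a finite quotient: with $d=\left[2h/(p-1)-1\right]$ or $\left[2h/(p-1)-2\right]$ chosen so that $d\not\equiv 1\pmod p$, the explicit set $A=\{e_1,\dots,e_d,e_1+\cdots+e_d\}\subset\Fpd$ is shown (Lemmas \ref{lem:vs1}, \ref{lem:vs2}, \ref{lem:vs3}) to be a weak basis of order at most $(d+1)(p-1)/2\le h$ whose exact order is precisely $d(p-1)\ge 2h-3p+3$, and this example is transported to $G$ by taking the full preimage under $G\to G/K\cong\Fpd$ (Lemma \ref{lem:lifting}), which transfers every sumset identity verbatim. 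Your alternative transport — building the example on a countable-dimensional summand and ``adjoining a spanning set of a complement'' — is not sound as stated: once the complement's spanning set is added, sums mix the two parts, and both the upper bound on $\ord^*_G(A)$ and the lower bound on $\ord^*_G(A\setminus\{a\})$ would have to be re-established from scratch. Consequently the final $p=2$ asymptotic, while correctly deduced from (i) and (ii), is conditional on two arguments that the proposal does not supply.
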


Though in general we do not know if $X_G(h)$ is finite, we can confirm this when $h=2$ or $h=3$ (for any infinite abelian group $G$). Clearly, $X_G(1)=1$.

\begin{theorem} 
\label{th:main-x3}
For any infinite abelian group $G$, we have
\begin{enumerate}[(i)]
\item \label{th:main-x3a} $3 \leq X_G(2) \leq 5$. 
\item \label{th:main-x3b} $4 \leq X_G(3) \leq 17$.
\end{enumerate}
\end{theorem}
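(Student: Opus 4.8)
The plan is to treat the lower and upper bounds separately: the lower bounds fall out softly from the two earlier theorems, while the upper bounds demand a concrete analysis of short representations of the neutral element.

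For the lower bounds I would apply Theorem~\ref{th:main-minimal} to fix a (nice) minimal basis $A$ of order $h$ of $G$. Removing an element cannot decrease the order, so $\ord_G^*(A\setminus\{a\})\ge h$ for all $a$, and minimality promotes this to $\ord_G^*(A\setminus\{a\})\ge h+1$ for every \emph{regular} $a$. It remains only to exhibit one regular element: $A$ is infinite, whereas by Theorem~\ref{th:main-e}(i) at most $h-1$ of its elements are exceptional, so regular elements abound. Choosing one gives $X_G(h)\ge h+1$, i.e. $X_G(2)\ge 3$ and $X_G(3)\ge 4$.

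For the upper bounds, fix a basis $A$ with $hA\sim G$ and a regular $a\in A^*$, and set $B=A\setminus\{a\}$; since $a$ is regular, $B$ is a basis of finite order, and bounding $\ord_G^*(B)$ is exactly our task. As everything in sight is translation invariant I may assume $a=0\in A$, whence $hA=\{0\}\cup\bigcup_{j=1}^h jB$ and the hypothesis $hA\sim G$ gives $\bigcup_{j=1}^h jB\sim G$: thus $B$ is a \emph{weak} basis of order at most $h$, so cofinitely many $g\in G$ lie at one of the $h$ consecutive levels $1,\dots,h$. The crux is to collapse this to a single \emph{exact} level. Writing $Z=\{j\ge 2:0\in jB\}$ and using that $g\in\ell B$ together with $0\in tB$ yields $g\in(\ell+t)B$, a common target $k$ is reachable from all source levels $1,\dots,h$ as soon as $\{k-h,\dots,k-1\}\subseteq Z$, that is, as soon as $Z$ contains a block of $h$ consecutive integers. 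Two observations organise the search for such a block: $Z\cup\{0\}$ is a submonoid of $(\Z_{\ge 0},+)$, and testing $-b\in\bigcup_j jB$ shows that $Z$ meets every block of $h$ consecutive integers above $1$.

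The decisive point—where regularity is used in full—is that $B$ being a genuine basis forces $\gcd(Z)=1$. Indeed, if some $d\ge 2$ divided every element of $Z$, then rewriting each $-b$ as a short positive sum (via the weak basis property) one checks that every integer relation $\sum_b c_b\,b=0$ satisfies $\sum_b c_b\equiv 0\pmod d$, so $b\mapsto 1$ extends to a surjection $G\to\Z/d\Z$ constant on $B$; but then every $kB$ sits inside a single coset and cannot be cofinite, contradicting that $B$ is a basis. With $\gcd(Z)=1$ the submonoid $Z$ is cofinite in $\Z_{\ge 0}$ and therefore contains arbitrarily long runs, which already proves $\ord_G^*(B)<\infty$.

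The main obstacle is that this only furnishes \emph{some} run, hence a bound governed by the Frobenius number of $Z$, whereas the constants $5$ and $17$ require a run of $h$ consecutive integers that is \emph{uniformly small}. To pin one down I would determine membership of $2,3,4,\dots$ in $Z$ directly, according to whether $-b,-2b,\dots$ land in $B$ or in $2B$, and so on. The delicate issue is the modular behaviour: configurations such as $0\in 3B,5B$ but $0\notin 2B,4B$ would force the target up to $7$ and must be ruled out. They are excluded exactly by the basis hypothesis, since such degenerate patterns correspond to $B$ being confined to a ``cone'' or a coset (just as $\Z^+$ sits inside $\Z$), in which case $B$ is not a basis and $a$ would be exceptional. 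Executing this finite but fiddly case check for $h=2$ should yield two consecutive levels inside $\{2,3,4\}$ (whence $k\le 5$), and the analogous, heavier analysis for $h=3$ should yield three consecutive levels of size at most $14$ (whence $k\le 17$). I expect this uniform control of the \emph{small} elements of $Z$, interlocked with the exact-versus-weak bookkeeping, to be the principal difficulty.
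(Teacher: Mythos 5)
Your treatment of the lower bounds is correct and coincides with the paper's: Theorem~\ref{th:main-minimal} gives a nice minimal basis of order $h$, minimality forces $\ord_G^*(A\setminus\{a\})\ge h+1$ for any regular $a$, and regular elements exist because $A$ is infinite while only finitely many elements are exceptional.

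The upper-bound strategy, however, has a genuine gap, and it is not a fixable bookkeeping issue: working only with the set $Z=\{j\ge 2: 0\in jB\}$ of lengths of representations of \emph{zero} discards exactly the information that the bounds $5$ and $17$ require. Concretely, your claim that for $h=2$ one can always find two consecutive elements of $Z$ inside $\{2,3,4\}$ is false. Take $G=\Z$ and $B=\{n\in\Z:\ n\equiv\pm1 \pmod 5\}$. Then $B\cup 2B=\Z$, so $A=B\cup\{0\}$ is a basis of order $2$ with $0$ regular and $B=A\setminus\{0\}$; one checks $\ord_G^*(B)=4$ (the set $3B$ misses the entire class $0\bmod 5$, while $4B=\Z$). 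Here $0\in 2B$ but $0\notin 3B$, since every element of $3B$ is $\equiv 1,2,3,4\pmod 5$; in fact $Z=\{2,4,5,6,\dots\}$. The first consecutive pair in $Z$ is $\{4,5\}$, so your mechanism certifies only $\ord_G^*(B)\le 6$: it cannot even detect the true order $4$, let alone prove the bound $5$. This example also shows that your proposed exclusion device is insufficient: the gcd/homomorphism argument (which is essentially sound, and does show $\gcd(Z)=1$ for a basis) rules out only the case $\gcd(Z)>1$; it says nothing about \emph{gaps} in $Z$, and gap patterns such as $Z\cap\{2,3,4\}=\{2,4\}$ do occur for genuine bases. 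Since you have no uniform control on the Frobenius number of $Z$, the approach as written does not even yield finiteness of $X_G(2)$ with an explicit constant, and the same objection applies a fortiori to $h=3$.

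The paper's proof runs on strictly stronger information: nonempty intersections $nA\cap(n+m)A\neq\emptyset$, where the common element $c$ need not be $0$. (In the example above, $2B\cap 3B\neq\emptyset$ even though $0\notin 3B$, and this is what gives $4B\sim\Z$.) Lemma~\ref{lem:intersection} propagates such a $c$ into an arithmetic progression of levels, and the translation trick $x=(x-c)+c$ collapses the weak-basis levels into one exact level; in the complementary cases, where consecutive sumsets are pairwise disjoint, the coverings $lA\cup(l+1)A=G$ (resp.\ $lA\cup(l+1)A\cup(l+2)A=G$) from Lemma~\ref{lem:whole} become partitions, forcing equalities such as $2A=4A$, $3A=5A$ or $3A=7A$, which together with the finiteness of $\ord_G^*(A)$ give order at most $3$. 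To salvage your argument you would have to replace ``representations of zero'' by ``pairs of representations of a common element with different lengths''---which is precisely the paper's proof.
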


Finally, we turn to the analogue of $S$. We define $S_G(h)$ to be the minimum value of $s$ such that for all $A$ satisfying $hA \sim G$, 
there are only finitely many elements $a \in A$ with the property that 
\[
\ord_G^* (A \setminus \{ a\} ) > s.
\]
Again, a priori, it is not clear that $S_G(h)$ is well defined (though it is clear that $S_G(1)=1$ for any G). It follows immediately from the definition that $S_G(h) \leq X_G(h)$. 
We show that for $S_G(h)$, we have exactly the same bounds as in (\ref{eq:res-s}), 
by generalizing the arguments from \cite{cp}. In doing so, we need to use the notion of \textit{amenability}, which makes the argument no longer elementary. 

\begin{theorem} \label{th:main-s1}
For any infinite abelian group $G$ and $h \geq 2$, we have $h+1 \leq S_G(h) \leq 2h$.
\end{theorem}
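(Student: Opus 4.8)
The inequality $S_G(h)\ge h+1$ follows almost formally from the two preceding theorems, so I would dispatch it first. By Theorem~\ref{th:main-minimal}, $G$ has a minimal basis $A$ of order $h$; being a basis of an infinite group, $A$ is infinite (a finite set has only finitely many $h$-fold sums). For every $a\in A$ we have $A\setminus\{a\}\subseteq A$, hence $k(A\setminus\{a\})\subseteq kA$ and so $\ord_G^*(A\setminus\{a\})\ge \ord_G^*(A)=h$; minimality says $A\setminus\{a\}$ is not a basis of order $h$, so in fact $\ord_G^*(A\setminus\{a\})\ge h+1$ unless $a$ is exceptional. By Theorem~\ref{th:main-e} there are at most $h-1$ exceptional elements, so infinitely many $a\in A$ satisfy $h+1\le \ord_G^*(A\setminus\{a\})<\infty$. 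Thus $s=h$ fails the defining property of $S_G(h)$, and $S_G(h)\ge h+1$.

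\textbf{Upper bound: set-up and heuristic.} For the bound $S_G(h)\le 2h$ I fix a basis $A$ with $hA\sim G$, write $B=A\setminus\{a\}$, and aim to show that all but finitely many $a$ satisfy $\ord_G^*(B)\le 2h$. The organizing principle is the decomposition
\[
hA=\bigcup_{j=0}^{h}\bigl(ja+(h-j)B\bigr),
\]
sorted by how often $a$ is used. If $g$ has an $h$-term representation using $a$ at most once, i.e. $g\in hB\cup\bigl(a+(h-1)B\bigr)$, then one isolated occurrence of $a$ can be rewritten with roughly $h$ further summands from $B$, turning $g$ into a sum of about $2h$ elements of $B$. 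Everything therefore hinges on showing that, for almost every $a$, almost every $g$ admits a representation using $a$ \emph{at most once}; it is exactly this "at most once" that separates the linear bound $2h$ from the quadratic growth governing $X_G$ (where $a$ may have to be substituted into many slots).

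\textbf{Upper bound: the amenability input.} Since $G$ is abelian it is amenable, and the plan is to use a F{\o}lner sequence $(F_N)$ together with the associated invariant mean $\mu$. Let $W_a$ be the set of $g\in hA$ all of whose $h$-term representations use $a$ at least twice. The key combinatorial fact is a slot count: if distinct elements $a^{(1)},\dots,a^{(t)}$ all had $g\in W_{a^{(i)}}$, then a single representation of $g$ would contain at least two copies of each, forcing $2t\le h$; hence for fixed $g$ at most $\lfloor h/2\rfloor$ elements $a$ have $g\in W_a$. Summing over a F{\o}lner window,
\[
\sum_{a\in A}|W_a\cap F_N|=\sum_{g\in F_N}\#\{a\in A:\ g\in W_a\}\le \tfrac{h}{2}\,|F_N|,
\]
so for each $\varepsilon>0$ all but finitely many $a\in A$ have $W_a$ of density below $\varepsilon$. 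For such $a$ the covering $hA=hB\cup(a+(h-1)B)\cup W_a$ and subadditivity give $\mu(hB)+\mu((h-1)B)\ge 1-\mu(W_a)\ge 1-\varepsilon$.

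\textbf{Upper bound: the main obstacle.} The hard part will be upgrading this near-complete density into a genuine cofinite \emph{exact} basis of order at most $2h$. The clean target is the principle that $\mu(S)+\mu(T)>1$ forces $S+T=G$ (two translates of total density exceeding $1$ must meet); applied to $S=hB$ and $T=(h-1)B$ it yields $(2h-1)B=G$, hence $\ord_G^*(B)\le 2h$, and as a bonus it produces a single exact sumset, sidestepping any separate representation of $a$. The genuine difficulty is twofold: the averaging only gives total density $\ge 1-\varepsilon$, not strictly above $1$, and near-complete density alone does not force a cofinite sumset because of periodic obstructions (for instance $B$ trapped in a proper subgroup, as with $2\Z\subset\Z$). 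This is precisely where a Kneser-type theorem for amenable groups must be invoked to analyse the critical case through the stabilizer of the sumset; the periodic obstruction is then eliminated using that a regular $a$ forces $B$ to generate $G$ and meet every coset, a property that persists for all but finitely many $a$. Executing this critical-case analysis while keeping the number of summands at most $2h$, and fixing one threshold $\varepsilon_0$ so that the admissible $a$ form a cofinite subset of $A$, is the crux of the argument and the only non-elementary step.
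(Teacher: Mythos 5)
Your lower bound is correct and coincides with the paper's: a minimal basis $A$ of order $h$ (Theorem \ref{th:main-minimal}) is infinite, and every $a\in A$ satisfies $\ord_G^*(A\setminus\{a\})>h$, so $s=h$ fails the definition of $S_G(h)$. (The detour through Theorem \ref{th:main-e} is unnecessary: exceptional elements have $\ord_G^*(A\setminus\{a\})=\infty>h$ and witness the failure just as well.) The upper bound, however, contains a genuine gap, and you have named it yourself: everything after ``the main obstacle'' is a plan, not an argument. Concretely, your averaging yields only $\mu(hB)+\mu((h-1)B)\ge 1-\varepsilon$, and the pigeonhole principle ``sum of densities strictly greater than $1$ forces $S+T=G$'' cannot be applied at this threshold; no choice of $\varepsilon_0$ repairs this, because the defect $\varepsilon$ sits on the wrong side of $1$. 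The proposed rescue---a Kneser-type theorem for Banach density in amenable groups plus a critical-case stabilizer analysis---is a genuinely deep result that you neither state precisely nor apply, and it is not at all clear it can be run so as to give the exact bound $2h$ uniformly for all but finitely many $a$. There is also a smaller technical slip: bounding, for each fixed $N$, the number of $a$ with $|W_a\cap F_N|\ge\varepsilon|F_N|$ does not bound the number of $a$ whose upper density along $(F_N)$ is at least $\varepsilon$, since the set of bad $a$'s may vary with $N$; this is why the paper works throughout with a single invariant mean $\Lambda$ and finite subsets $I\subset A$.

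The idea you are missing is the paper's Claim 2, which makes any sumset-density theorem unnecessary. Work with the indicator $f_a$ of $hA\setminus hB$ (``every representation of $x$ uses $a$ at least once''), not ``at least twice''. The counting argument (your slot count; the paper's Claim 1) shows that all but finitely many $a$ satisfy $\Lambda(f_a)<1/h$. For such an $a$, translation invariance of the mean gives
\[
\Lambda\Bigl(\sum_{i=0}^{h-1}\tau_{ia}f_a\Bigr)=h\Lambda(f_a)<1 ,
\]
so the nonnegative integer-valued function $x\mapsto\sum_{i=0}^{h-1}f_a(x+ia)$ vanishes on an infinite set of $x$; choosing such an $x$ with moreover $x+ia\in hA$ for all $i$ (possible since $G\setminus hA$ is finite), one obtains a \emph{single} $x$ with $x+ia\in hB$ for every $i=0,1,\dots,h-1$. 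This one element repairs all of $G$: for all but finitely many $y$ we have $y-x\in hA$ and $y-x\ne ha$, so $y-x$ admits a representation using $a$ exactly $i$ times for some $i\le h-1$, i.e.\ $y-x-ia\in(h-i)B$; hence
\[
y=(y-x-ia)+(x+ia)\in(2h-i)B\subset 2hB ,
\]
where the last inclusion uses the normalization $0\in A$, $a\ne0$, so that $0\in B$. The contrast with your plan is exactly the point: you try to show that \emph{almost every} $g$ needs $a$ at most once, which forces you into Kneser-type territory; the paper needs only \emph{one} arithmetic progression $x,x+a,\dots,x+(h-1)a$ inside $hB$, and translation does the rest.
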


In contrast with Theorem \ref{th:main-e} but as in the case of $\N$, we do not know if these bounds are best possible. 
However, we can prove the following equality.

\begin{theorem} 
\label{th:main-s2}
For any infinite abelian group $G$, we have $S_G(2)=3$.
\end{theorem}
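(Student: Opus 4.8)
The plan is to prove the nontrivial inequality $S_G(2) \le 3$; the reverse inequality $S_G(2) \ge 3$ is already contained in Theorem \ref{th:main-s1}. Unwinding the definition, $S_G(2) \le 3$ is equivalent to the assertion that for every basis $A$ of order $2$ of $G$ the \emph{exceptional set}
\[
\mathcal B \;=\; \{\,a \in A : 3(A \setminus \{a\}) \not\sim G\,\}
\]
is finite. So I would fix such an $A$, together with a finite set $F_0$ satisfying $G \setminus F_0 \subseteq 2A$, and for each $a \in A$ introduce $U_a = 2A \setminus 2(A\setminus\{a\})$, the set of elements all of whose $2$-representations over $A$ use $a$.

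First I would record the elementary structure of the sets $U_a$. Since any $x \in U_a$ admits a representation using $a$, we have $U_a \subseteq a + A$; and if $x \in U_a$ then $A \cap (x - A) \subseteq \{a, x-a\}$, so $x$ has an essentially unique representation. From the second fact one gets the key disjointness statement: for $a \neq a'$ one has $U_a \cap U_{a'} \subseteq \{a+a'\}$, since $\{a, x-a\} = A \cap (x-A) = \{a', x-a'\}$ forces $x = a+a'$. Thus the family $\{U_a\}_{a \in A}$ is pairwise disjoint up to single points. Next I would analyse exceptionality: if $a \in \mathcal B$, then $G \setminus 3(A\setminus\{a\})$ is infinite, and for each $g$ in this set and each $b \in A\setminus\{a\}$ with $g - b \notin F_0$ one checks that $g - b \in U_a$; hence $(g - A) \setminus (\text{finite}) \subseteq U_a$. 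In particular $g - a - b \in A$ for all but finitely many $b \in A$, giving $A \sim (g-a) - A$. As there are infinitely many admissible $g$, the period group $\mathrm{Per}(A) = \{t \in G : A \sim A + t\}$ (a subgroup of $G$) is infinite whenever $\mathcal B \neq \emptyset$.

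To finish I would split according to the upper Banach density $d^*(A)$, computed along a F\o lner sequence, which exists since abelian groups are amenable. If $d^*(A) > 0$, choose an invariant mean $\mu$ with $\mu(A) = d^*(A)$ and $\mu(-X) = \mu(X)$. For each $a \in \mathcal B$ the inclusion $U_a \supseteq (g-A)\setminus(\text{finite})$ gives $\mu(U_a) \ge \mu(g - A) = \mu(A)$, while the almost-disjointness of the $U_a$ and finite additivity of $\mu$ give, for every finite $\mathcal B' \subseteq \mathcal B$,
\[
|\mathcal B'|\, d^*(A) \;\le\; \sum_{a \in \mathcal B'}\mu(U_a) \;=\; \mu\!\left(\bigcup_{a \in \mathcal B'} U_a\right) \;\le\; 1,
\]
whence $|\mathcal B| \le 1/d^*(A) < \infty$.

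The remaining, and genuinely harder, case is $d^*(A) = 0$, where I would aim to show $\mathcal B = \emptyset$ outright. Here the previous paragraph forces $\mathrm{Per}(A)$ to be infinite, and the crux is to convert this almost-periodicity into a positive-density statement, contradicting $d^*(A) = 0$. Decomposing $G$ into cosets of a cyclic subgroup $\langle t\rangle$ generated by an almost-period $t$, the relation $A \sim A + t$ forces $A$ to be, off finitely many cosets, a union of full $\langle t\rangle$-cosets, after which the basis condition $2A \sim G$ should push enough cosets into $A$ to make $d^*(A)$ positive. Making this step uniform — and handling the delicate situation where $\mathrm{Per}(A)$ is an infinite torsion group, which seems to require iterating the coset reduction — is where the argument stops being elementary and where the amenability machinery generalizing \cite{cp} is needed. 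This is the main obstacle, and it is precisely the analogue of the implication ``eventually periodic $\Rightarrow$ positive density'' that comes for free in $\N$ but must be rebuilt by hand in a general abelian group.
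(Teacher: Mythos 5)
Your reduction to finiteness of $\mathcal B$, the sets $U_a$, their almost-disjointness, and the whole positive-density half of the argument are correct (one simplification: you do not need a symmetric mean, since badness of $a$ already gives $A \sim (g-a)-A$, whence $\mu(-A)=\mu(A)$ for \emph{every} invariant mean $\mu$ killing finite sets). The genuine gap is Case 2, and it is not a marginal case you can defer: bases of order $2$ with $d^*(A)=0$ exist already in $\Z$ (e.g.\ restricted-digit thin bases, whose windows of length $L$ contain $O(\sqrt L)$ elements), so the zero-density case is in fact the typical one, and your proof stops exactly where you say it does. Worse, the bridge you propose --- that $\mathrm{Per}(A)$ infinite together with $2A\sim G$ forces $d^*(A)>0$ --- is false in a general abelian group. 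Take $G=\Z\oplus K$ with $K=\bigoplus_{i\ge 1}\Z/2\Z$, let $K_j$ be the span of the first $j$ coordinates, choose $B\subset\Z$ of zero Banach density such that every $m\in\Z$ has infinitely many representations $m=n+n'$ with $n,n'\in B$ (take $B=\{-b_i\}\cup\{b_i+m_i\}$ where $(m_i)$ enumerates $\Z$ with infinite repetition and $b_i$ grows fast), and set $A=\{(n,x): n\in B,\ x\in K_{\phi(n)}\}$ with $\phi\to\infty$ along $B$. Then $2A=G$, and $A\sim A+t$ for every $t\in K$ (only the finitely many fibers with $\phi(n)$ small are disturbed), so $\mathrm{Per}(A)\supseteq K$ is infinite; yet pushing any invariant mean forward under the projection $G\to\Z$ gives $\mu(A)\le d^*_{\Z}(B)=0$, so $d^*(A)=0$. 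Thus ``infinite period group $\Rightarrow$ positive density'' cannot be rescued even using the basis hypothesis; closing Case 2 would require exploiting the full strength of badness (the inclusions $(g-A)\setminus(\textup{finite})\subseteq U_a$ and the almost-symmetry $A\sim (g-a)-A$), not merely the periods it produces, and that is precisely what is missing.

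For contrast, the paper's proof of $S_G(2)\le 3$ uses no density or amenability at all and is a short sumset argument: translate so that $0\in A$; for a bad $b\neq 0$ write $A_b=A\setminus\{b\}$; since $G\sim 2A\sim 2A_b\cup(A_b+b)$ and, for any $a\in A_b$, $G\sim 2A+a\subseteq 3A_b\cup(A_b+b+a)$ up to finite sets, both $A_b+b$ and $A_b+b+a$ almost cover the infinite set $G\setminus 3A_b$, giving \emph{Claim 1}: $(A_b+a)\cap A_b$ is infinite for every $a\in A_b$; on the other hand \emph{Claim 2}: $(A_b+b)\cap A_b=\emptyset$, since a coincidence $b+a_1=a_2$ with $a_1,a_2\in A_b$ would force $3A_b\sim G$. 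Two distinct nonzero bad elements $b,b'$ then annihilate each other: Claim 1 for $b$ with $a=b'$ produces infinitely many coincidences that Claim 2 for $b'$ forbids, so $A$ has at most two (in fact at most one) bad elements. Your covering statement $(g-A)\setminus(\textup{finite})\subseteq U_a$ is essentially the engine behind Claim 1, so your Part 1 machinery is close to a complete proof along the paper's lines; but the density dichotomy as you have structured it leaves the main case open.
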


The structure of the paper is as follows. In Section \ref{sec:prelim}, we will prove some useful facts, including a generalization 
of the Erd\H{o}s-Graham's criterion for regular elements of a basis.
In Section \ref{sec:minimal}, we will prove Theorem \ref{th:main-minimal}. 
In Section \ref{sec:e}, we will prove results related to the function $E_G$, including Theorem \ref{th:main-e}.
In Section \ref{sec:x}, we will prove results related to the function $X_G$, including Theorems \ref{th:main-x1}, \ref{prop:x-lower}, \ref{th:main-x2} and \ref{th:main-x3}.
Finally, we will prove results related to the function $S_G$, including Theorems \ref{th:main-s1} and \ref{th:main-s2} in Section \ref{sec:s}.

\section{Preliminaries} 
\label{sec:prelim}

\subsection{Some observations} 
\label{sec:obs} 

We first state some simple observations which we will use later on. Some of them are immediate and the proofs will be omitted.

\begin{lemma} \label{lem:translate}
Let $G$ be an infinite abelian group and $A \subset G$. 
If $A$ is a (nice) basis of $G$ and $b \in G$, then $A - b = \{a - b: a \in A \}$ is a (nice) basis of the same order.
\end{lemma}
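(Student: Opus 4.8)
The plan is to prove Lemma~\ref{lem:translate}: translating a (nice) basis by a group element preserves both the basis property and its order. The key structural fact I would exploit is that translation by $b$ commutes with the sumset operation up to a predictable shift. Specifically, for any subset $A \subset G$ and any positive integer $i$, if we write $A - b = \{a - b : a \in A\}$, then summing $i$ elements of $A - b$ produces exactly $i(A - b) = iA - ib$, since each of the $i$ summands contributes a copy of $-b$. This identity is the engine of the whole argument, and I would state it first as the single computation on which everything rests.

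First I would record the sumset identity $i(A - b) = iA - ib$ for every $i \geq 1$; this follows immediately from the definition of the $i$-fold sumset, as each element of $i(A-b)$ has the form $(a_1 - b) + \cdots + (a_i - b) = (a_1 + \cdots + a_i) - ib$. Next, I would invoke the elementary observation that translation by a fixed element $c \in G$ is a bijection of $G$ onto itself, and hence for any set $T \subset G$ we have $T \sim G$ if and only if $T - c \sim G$ (translation preserves finiteness of the symmetric difference, since $G \setminus (T - c) = (G \setminus T) - c$ and similarly for the other direction). Combining these, $i(A - b) = iA - ib \sim G$ if and only if $iA \sim G$, which shows at once that $A - b$ is a basis of order at most $h$ precisely when $A$ is, and therefore $\ord^*_G(A - b) = \ord^*_G(A)$.

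For the nice case I would argue identically but with equality of sets in place of $\sim$: since $iA = G$ if and only if $iA - ib = G$ (again because translation is a bijection of $G$), the identity $h(A - b) = hA - hb$ gives $h(A-b) = G \iff hA = G$. The same reasoning handles the weak-basis version if one wishes to include it, replacing $iA$ by the union $\bigcup_{i=1}^h iA$ and noting that translation distributes over the union while each term shifts consistently.

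I do not anticipate a genuine obstacle here, as the proof is entirely formal; the only point requiring a word of care is that the translate $-ib$ depends on $i$, so one must verify the sumset identity for each $i$ separately rather than assuming a single uniform shift. Since the paper explicitly notes that many such observations are immediate and their proofs will be omitted, I would keep the writeup to the two displayed identities above and a one-line remark that translation by a group element is a measure- and finiteness-preserving bijection of $G$.
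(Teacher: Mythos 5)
Your proposal is correct and follows essentially the same route as the paper, whose entire proof is the identity $h(A-b) = hA - hb$; your additional observation that translation preserves the relation $\sim G$ is the (implicit) step the paper leaves to the reader. Nothing further is needed.
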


\begin{proof}
This is immediate since, for any integer $h$,  $h(A-b) = hA -hb$.
\end{proof}

Suppose $H$ is a subgroup of $G$. For $x \in G$, let $\overline{x}$ denote the coset of $x$ in $G/H$ (we distinguish between elements of $G/H$ and subsets of $G$).
Then there is a natural way to produce a basis for $G$.

\begin{lemma} 
\label{lem:rep}
Let $G$ be an abelian group and $H$ be a subgroup of $G$. Let $\Lambda \subset G$ be a system of representatives of $G/H$ in $G$, that is, for any $x\in G$, there is exactly one element $\lambda \in \Lambda$
such that $x+H=\lambda+H$. Then every $x \in G$ can be expressed in a unique way as
\[
x = \lambda + g
\]
where $\lambda \in \Lambda, g \in H$. In particular, if $H \neq G$ and $H \neq \{0\}$ then $\Lambda \cup H$ is a nice basis of order 2 of $G$.
\end{lemma}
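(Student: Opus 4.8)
The plan is to establish the unique decomposition first, and then read the basis property off of it. For existence, I would fix $x \in G$ and invoke the defining property of $\Lambda$: there is a unique $\lambda \in \Lambda$ with $x + H = \lambda + H$, so that $g := x - \lambda$ lies in $H$ and $x = \lambda + g$. For uniqueness, suppose $x = \lambda + g = \lambda' + g'$ with $\lambda,\lambda' \in \Lambda$ and $g,g' \in H$. Then $\lambda - \lambda' = g' - g \in H$, hence $\lambda + H = \lambda' + H$, and the uniqueness clause defining $\Lambda$ forces $\lambda = \lambda'$, whence also $g = g'$.

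Next I would set $A = \Lambda \cup H$ and deduce $2A = G$ at once: any $x \in G$ decomposes as $x = \lambda + g$ with $\lambda \in \Lambda \subseteq A$ and $g \in H \subseteq A$, so $x \in A + A = 2A$; the reverse inclusion is trivial. Since $2A = G$ with \emph{equality}, this already shows that $A$ is a nice basis of order at most $2$.

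It remains to verify that the order is exactly $2$, i.e. that $A \not\sim G$. Here I would count $G \setminus A$ coset by coset. The coset $H$ is contained in $A$, but for every other coset $\lambda + H$ (with $\lambda \in \Lambda$, $\lambda \notin H$) we have $A \cap (\lambda + H) = \{\lambda\}$, because $\Lambda$ meets this coset only in its representative and $H \cap (\lambda + H) = \emptyset$. Thus each of the $[G:H]-1$ cosets different from $H$ contributes $|H|-1$ elements to $G \setminus A$, giving $|G \setminus A| = ([G:H]-1)(|H|-1)$. The hypotheses $H \neq \{0\}$ and $H \neq G$ make both factors positive ($|H| \geq 2$ and $[G:H] \geq 2$), and since $G$ is infinite at least one factor is infinite; hence $G \setminus A$ is infinite and $A \not\sim G$, so the order is precisely $2$.

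The only delicate point is this last step, distinguishing order $2$ from order $1$, and it is exactly where the two hypotheses $H \neq \{0\}$ and $H \neq G$ enter; the decomposition and the identity $2A = G$ are routine. (If one wishes to allow finite $G$, the same count gives $|A| = [G:H] + |H| - 1 < [G:H]\cdot|H| = |G|$ via $([G:H]-1)(|H|-1) \geq 1$, so $A \neq G$ there as well.)
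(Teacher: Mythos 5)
Your proof is correct. Note that the paper itself gives no proof of this lemma --- it is one of the observations in Section 2.1 declared ``immediate'' with proof omitted --- so there is no argument to compare against; your writeup is the natural filling-in of that gap. The decomposition $x = \lambda + g$ and the identity $2(\Lambda \cup H) = G$ are indeed the routine parts; the one point that genuinely requires the hypotheses $H \neq \{0\}$ and $H \neq G$ is ruling out order $1$, and your coset count $|G \setminus (\Lambda \cup H)| = ([G:H]-1)(|H|-1)$, with at least one factor infinite when $G$ is infinite, settles it cleanly (and your parenthetical remark correctly handles the finite case, where ``nice basis of order $2$'' must be read via $2A = G$ and $A \neq G$).
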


In constructing bases for $G$ we will need special systems of representatives which are given by the following

\begin{lemma} 
\label{lem:rep2}
Let $G$ be an abelian group and $H$ be a subgroup of $G$. Then there is a system of representatives $\Lambda$ of $G/H$ in $G$ such that $0 \in \Lambda$ and $\Lambda = -\Lambda$.
\end{lemma}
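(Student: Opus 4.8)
The plan is to organise the cosets of $H$ in $G$ according to how negation $x \mapsto -x$ acts on them, and then to choose one representative per coset compatibly with that action. Write $Q = G/H$ and let $\pi \colon G \to Q$ be the projection. Negation descends to an involution $q \mapsto -q$ on $Q$, and a system of representatives $\Lambda$ satisfies $\Lambda = -\Lambda$ precisely when the chosen lift respects this involution, i.e. the representative of $-q$ is the negative of the representative of $q$. So the whole construction is a matter of lifting the negation involution from $Q$ to a symmetric transversal in $G$.

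First I would dispose of the trivial coset and of the ``free'' pairs. Assign to the identity coset $\overline{0}$ the representative $0 \in G$; this guarantees $0 \in \Lambda$. Next, partition the nonzero elements of $Q$ into orbits under negation. An orbit $\{q,-q\}$ with $q \neq -q$ (equivalently $2q \neq \overline{0}$) is a genuine pair: using a choice function on the set of such unordered pairs, I pick one member $q$ of each, lift it to an arbitrary $\lambda \in G$ with $\pi(\lambda) = q$, and then declare $-\lambda$ to be the representative of $-q$. Since $\pi(-\lambda) = -q$ and $-q \neq q$, this assigns exactly one representative to each of $q$ and $-q$, and the two are negatives of one another. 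Taking the union over all free pairs yields a symmetric partial transversal.

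The main obstacle is the remaining cosets: the fixed points of the involution, that is the cosets $C$ with $C = -C$ and $C \neq H$, equivalently the nonzero $2$-torsion of $Q$. For such a $C$ the unique representative $\lambda \in \Lambda \cap C$ must have $-\lambda \in \Lambda$; but $-\lambda$ again lies in $C = -C$, and $\Lambda$ meets $C$ only in $\lambda$, so we are forced to $-\lambda = \lambda$, i.e. $2\lambda = 0$ in $G$. Thus the entire statement reduces to one lifting assertion, which is the heart of the matter: \emph{every negation-fixed coset contains an element of order dividing $2$}. Writing $C = x + H$ with $2x \in H$, I must find $h \in H$ solving $2h = -2x$ inside $H$, so as to take $\lambda = x + h$. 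My plan is to handle the $2$-torsion globally, setting $G[2] = \{g \in G : 2g = 0\}$ and $Q[2] = \{q \in Q : 2q = \overline{0}\}$ and establishing that the induced map $G[2] \to Q[2]$ is surjective; granting this, I select (again by a choice function) a self-inverse representative for each fixed coset, and the union with the two earlier families is the desired $\Lambda$.

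I expect the surjectivity of $G[2] \to Q[2]$ to be the genuinely delicate point and the only place where structure, rather than pure bookkeeping, is needed; it is exactly here that a careless argument would slip, since a fixed coset need not obviously meet $G[2]$. By contrast the assembly is routine: once every coset has been assigned a representative, checking that $\Lambda$ is a transversal with $0 \in \Lambda$ and $\Lambda = -\Lambda$ is a direct verification, and the global selection can be phrased either by a single application of the axiom of choice to the two index sets (the free pairs and the fixed cosets) or by transfinite recursion along a well-ordering of the negation-orbits. I would therefore invest the real effort in the $2$-torsion lifting step and carry the combinatorial part through as bookkeeping.
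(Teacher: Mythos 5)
Your reduction is correct and, in fact, sharper than the paper's own one-line proof: pairing the negation-orbits $\{q,-q\}$ with $q \neq -q$ and lifting one member of each pair is unproblematic, and you rightly observe that for a negation-fixed coset $C = -C$, $C \neq H$, the unique representative $\lambda \in \Lambda \cap C$ is forced to satisfy $2\lambda = 0$, so the lemma is \emph{equivalent} to the surjectivity of the induced map $G[2] \to (G/H)[2]$ on $2$-torsion. The gap is that you defer this ``heart of the matter'' to a claim you never prove --- and no proof exists, because the claim is false. Take $G = \Z$ and $H = 4\Z$: then $G[2] = \{0\}$ while $(G/H)[2] = \{\overline{0}, \overline{2}\}$, and the negation-fixed coset $2 + 4\Z$ contains no element of order dividing $2$ (its elements $2+4k$ are all nonzero in $\Z$). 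So your pivotal lifting step fails, and by your own (correct) equivalence it fails irreparably: there is \emph{no} system of representatives of $\Z/4\Z$ in $\Z$ with $\Lambda = -\Lambda$ at all. In other words, the lemma as stated is false, and your proposal cannot be completed without adding a hypothesis --- e.g.\ that every negation-fixed coset of $H$ meets $G[2]$, which holds for instance when $G/H$ has no $2$-torsion or when $G$ is a vector space over $\F_p$ --- or without weakening the conclusion.

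It is worth seeing where the paper's proof hides the same problem: it decrees that $0$ represents $H$ and that ``if $\lambda$ is the representative of a coset $B$, then $-\lambda$ is the representative of $-B$.'' Applied to a coset $B = -B$, this prescription forces $-\lambda = \lambda$, which is exactly the $2$-torsion lifting problem you isolated, and the paper passes over it in silence. So your analysis has not merely left a gap in your own argument; it has located a genuine error in the paper, and your instinct that ``a careless argument would slip'' precisely here was right --- the honest output of your outline is a counterexample, not a proof. Note also that the defect propagates to the paper's use of the lemma in the proof of Theorem \ref{th:main-minimal}: for the Pr\"ufer group $\Z(2^{\infty})$, every admissible chain $G_1 \lneq G_2 \lneq \cdots$ has quotients $G_i/G_{i-1}$ whose nontrivial $2$-torsion coset contains no involution of $G_i$, and similarly for the dyadic rationals modulo any $H \cong \Z$ (the coset of $g/2$, $H = \langle g \rangle$, is fixed but $G$ is torsion-free), so the required symmetric transversals do not exist there and the construction needs repair.
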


\begin{proof}
 We select one representative from each coset of $H$ in $G$. Of course, we can select in such a way that $0$ is the representative of $H$, and if $\lambda$ is the representative of a coset $B$, then $-\lambda$ is the representative of the coset $-B$. 
\end{proof}

The next observation says that nice bases can be lifted from quotients to the whole group, a property not satisfied by mere bases.

\begin{lemma} 
\label{lem:lifting}
Let $G$ be an infinite abelian group and $H$ be a subgroup of $G$. Let $A \subset G/H$,
\[
B = \{x \in G : \overline{x} \in A \}
\]
and $h$ be a positive integer. Then we have:
\begin{enumerate}[(i)]
\item $hA = G/H$ if and only if $hB = G$,
\item $\bigcup_{i=1}^h iA = G/H$ if and only if $\bigcup_{i=1}^h iB = G$.
\end{enumerate}
\end{lemma}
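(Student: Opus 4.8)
The plan is to observe that $B$ is precisely the full preimage of $A$ under the canonical projection $x \mapsto \overline{x}$ from $G$ onto $G/H$, and to reduce both statements to a single pointwise fact about sumsets. The whole lemma will then follow formally.

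Concretely, I would first isolate the following claim: for every positive integer $i$ and every $g \in G$, one has $g \in iB$ if and only if $\overline{g} \in iA$. The forward implication is immediate: if $g = b_1 + \cdots + b_i$ with each $b_j \in B$, then applying the projection (which is a homomorphism) gives $\overline{g} = \overline{b_1} + \cdots + \overline{b_i}$, a sum of $i$ elements of $A$, since $\overline{b_j} \in A$ by the very definition of $B$.

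The substance is the converse, and this is where I expect the only real subtlety to lie. Suppose $\overline{g} = a_1 + \cdots + a_i$ with each $a_j \in A$. For $1 \le j \le i-1$ I would choose any $b_j \in G$ with $\overline{b_j} = a_j$; such $b_j$ exist and automatically lie in $B$, because $B$ contains the entire coset mapping to $a_j$. I would then be forced to set $b_i := g - (b_1 + \cdots + b_{i-1})$ so that the sum telescopes to $g$; the point is that this forced element still lies in $B$, since $\overline{b_i} = \overline{g} - (a_1 + \cdots + a_{i-1}) = a_i \in A$. Hence $g = b_1 + \cdots + b_i \in iB$ (the case $i=1$ being the degenerate instance where $b_1 = g$). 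The feature being exploited is exactly that $B$ is the \emph{full} preimage of $A$, not merely some set of representatives: once the first $i-1$ lifts are fixed arbitrarily, the last summand is determined, and we need it to remain available in $B$ no matter which value it takes within its coset.

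Granting the claim, both equivalences follow with no further work, using only that the projection is surjective and that $iA \subseteq G/H$ always holds. For part (i), $hB = G$ holds iff every $g \in G$ lies in $hB$, iff every $\overline{g} \in G/H$ lies in $hA$, iff $hA = G/H$. For part (ii), $\bigcup_{i=1}^{h} iB = G$ holds iff for every $g \in G$ there is some $i \le h$ with $g \in iB$, iff for every $\overline{g} \in G/H$ there is some $i \le h$ with $\overline{g} \in iA$, iff $\bigcup_{i=1}^{h} iA = G/H$. No estimates or structural hypotheses beyond the definition of $B$ are needed.
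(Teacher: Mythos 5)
Your proof is correct. The paper actually omits the proof of this lemma entirely (it is listed among the observations whose proofs are ``immediate'' and omitted), and your argument --- reducing everything to the pointwise claim that $g \in iB$ if and only if $\overline{g} \in iA$, with the converse direction using that $B$ is the \emph{full} preimage so the forced last summand stays in $B$ --- is exactly the intended standard argument.
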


The next lemma says that all bases are nice, at the cost of increasing the order.

\begin{lemma} \label{lem:whole}
Let $G$ be an infinite abelian group and $A \subset G$. 
\begin{enumerate}[(i)]
\item If $hA\sim G$, then $(h+1)A=G$,
\item If $\bigcup_{i=1}^h i A \sim G$, then $\bigcup_{i=2}^{h+1} iA = G$.
\end{enumerate}
\end{lemma}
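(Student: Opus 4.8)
The plan is to exploit the group structure---specifically the availability of subtraction---which is precisely what distinguishes this situation from the semigroup setting. The common starting point for both parts is the observation that $A$ must be infinite: if $A$ were finite, then each $iA$ would be finite and so would the finite union $\bigcup_{i=1}^{h} iA$ (respectively $hA$ in part (i)), contradicting the hypothesis that this set is cofinite in the infinite group $G$.

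For part (i), I would first note that since $A \subseteq G$ and $G$ is closed under addition, $(h+1)A \subseteq G$ automatically, so it suffices to show that every $g \in G$ lies in $(h+1)A$. The key equivalence is that $g \in (h+1)A$ if and only if $(g - A) \cap hA \neq \emptyset$: writing $g = a + (a_1 + \cdots + a_h)$ with all terms in $A$ is the same as requiring $g - a \in hA$ for some $a \in A$. Now let $F = G \setminus hA$, which is finite by hypothesis. The translate $g - A = \{g - a : a \in A\}$ is infinite because $A$ is infinite, so it cannot be contained in the finite set $F$; hence there is some $a \in A$ with $g - a \in hA$, which gives $g \in (h+1)A$.

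Part (ii) follows the same template with the union in place of $hA$. Setting $F' = G \setminus \bigcup_{i=1}^{h} iA$, which is finite by hypothesis, I would again use that $g - A$ is infinite to find $a \in A$ with $g - a \notin F'$, that is, $g - a \in iA$ for some $1 \le i \le h$. Then $g = a + (g - a) \in (i+1)A$ with $2 \le i+1 \le h+1$, so $g \in \bigcup_{i=2}^{h+1} iA$; the reverse containment in $G$ is again automatic.

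The argument is essentially elementary and I do not anticipate a serious obstacle. The only points requiring care are the bookkeeping on the index ranges in part (ii)---ensuring that the shift by one lands precisely in $\{2, \ldots, h+1\}$---and the initial verification that $A$ is infinite, without which the step ``an infinite set cannot sit inside a finite set'' would fail.
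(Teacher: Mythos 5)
Your proof is correct and follows essentially the same argument as the paper: for each $x \in G$, the translate $x - A$ is infinite and hence must meet the cofinite set $hA$ (respectively $\bigcup_{i=1}^{h} iA$), giving $x \in (h+1)A$ (respectively $x \in (r+1)A$ for some $2 \le r+1 \le h+1$). Your explicit verification that $A$ is infinite is a detail the paper leaves implicit, but the route is identical.
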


\begin{proof}
Suppose $hA\sim G$. Let $x$ be any element of $G$. Then $x-A$ is infinite, so it must have a non-empty intersection with $hA$. Therefore, $x \in (h+1)A$.
 
Suppose $\bigcup_{i=1}^h i A \sim G$. Let $x$ be any element of $G$. Since $x-A$ is infinite, it must have a non-empty intersection with $rA$ for some $1 \leq r \leq h$. Therefore, 
$$
x \in (r+1)A \subset \bigcup_{i=2}^{h+1} iA.
$$
\end{proof}

In finding bounds for $X_G$, we will need the following fact. If two sumsets of $A$ have a non-empty intersection, then we can find an arbitrarily long sequence of sumsets of $A$ whose intersection is also non-empty.

\begin{lemma} \label{lem:intersection}
Suppose $A \subset G$ and $m,n$ be nonnegative integers. 
If
\[
c \in nA \cap (n+m)A
\]
then for any positive integer $k$, we have
\[
kc \in knA \cap (kn + m)A \cap \cdots \cap (kn + km)A.
\]
\end{lemma}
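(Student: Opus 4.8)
The plan is to exploit the two given representations of $c$ and glue copies of them together. First I would fix a representation of $c$ as a sum of exactly $n$ elements of $A$, say $c = a_1 + \cdots + a_n$ with each $a_i \in A$, witnessing $c \in nA$, and a representation of $c$ as a sum of exactly $n+m$ elements of $A$, say $c = b_1 + \cdots + b_{n+m}$ with each $b_i \in A$, witnessing $c \in (n+m)A$.

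The key idea is that, since $kc = jc + (k-j)c$ for every $j$ with $0 \le j \le k$, I can build $kc$ by taking $j$ copies of the long representation and $k-j$ copies of the short one. For a fixed such $j$, concatenating these expressions writes $kc$ as a sum of elements of $A$ whose number is exactly
\[
j(n+m) + (k-j)n = kn + jm,
\]
so that $kc \in (kn+jm)A$. Letting $j$ run through $0, 1, \ldots, k$ then gives
\[
kc \in knA \cap (kn+m)A \cap \cdots \cap (kn+km)A,
\]
which is precisely the asserted conclusion.

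I expect no real obstacle: the whole argument is a concatenation of sumset representations, and the identity $j(n+m)+(k-j)n = kn+jm$ does all the work. The only points worth a line of care are the boundary indices $j=0$ (all $k$ copies short, giving membership in $knA$) and $j=k$ (all $k$ copies long, giving membership in $(kn+km)A$), together with the convention that an empty sum equals $0$, so that any degenerate instance such as $n=0$ or $k-j=0$ is handled uniformly.
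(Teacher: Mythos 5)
Your proof is correct, and it is exactly the intended argument: the paper states this lemma among the observations whose proofs are ``immediate and omitted,'' and the natural justification is precisely your concatenation of $j$ copies of the long representation with $k-j$ copies of the short one, using $j(n+m)+(k-j)n = kn+jm$. Nothing further is needed.
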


\subsection{Erd\H{o}s-Graham type criteria} \label{sec:egcriteria}
In \cite{eg}, Erd\H{o}s and Graham proved a criterion for weak bases in $\N$. They show that a weak basis $A$ of $\N$ is a basis if and only if
\begin{equation} \label{eq:eg1}
 \gcd(A-A) =1
\end{equation}
where $A-A= \{ a_1 - a_2: a_1, a_2 \in A\}$.
In turn, this implies a criterion for regular elements of a basis in $\N$. If $A$ is a basis of $\N$, then $a \in A$ is regular if and only if
\begin{equation} \label{eq:eg2}
 \gcd(A \setminus \{a\} - A \setminus \{a\}) =1.
\end{equation}
We will now prove extensions of these criteria in an arbitrary group.

\begin{lemma} 
\label{lem:egcrit1} 
Let $G$ be an infinite abelian group and $A$ be a weak basis of $G$. Then $A$ is a basis if and only if $\langle A-A \rangle$, 
the group generated by $A-A$ in $G$, is equal to $G$.
\end{lemma}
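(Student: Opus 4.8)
The plan is to prove both directions of the equivalence, treating the condition $\langle A - A \rangle = G$ as the group-theoretic analogue of $\gcd(A-A) = 1$. The key structural fact I would rely on is Lemma \ref{lem:rep}: a subgroup $H$ together with a system of representatives for $G/H$ forms a nice basis of order $2$. The natural candidate for the relevant subgroup here is $\langle A - A \rangle$ itself, since differences of basis elements always live in this group.

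**First I would** handle the easy direction. Suppose $A$ is a basis, so $hA \sim G$ for some $h$. Fix any $a_0 \in A$ and translate by $-a_0$; by Lemma \ref{lem:translate} the set $A - a_0$ is also a basis of the same order, and now $0 \in A - a_0$. Since $(A - a_0) - (A - a_0) = A - A$, replacing $A$ by $A - a_0$ does not change $\langle A - A \rangle$, so I may assume $0 \in A$ without loss of generality. Then $A \subset \langle A - A \rangle =: H$, hence $hA \subset H$ for every $h$; but $hA \sim G$ forces $H$ to be cofinite in $G$, and since $H$ is a subgroup this is only possible if $H = G$. This gives the forward implication.

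**For the converse**, assume $A$ is a weak basis with $\langle A - A \rangle = G$, and again normalize so that $0 \in A$ via Lemma \ref{lem:translate}. The hypothesis that $A$ is a weak basis means $\bigcup_{i=1}^{k} iA \sim G$ for some $k$; because $0 \in A$, the sets $iA$ are nested, so in fact $kA \sim G$, i.e.\ $A$ is already a basis. The substantive case is therefore when $A$ is a weak basis but not \emph{a priori} a basis, and the real content is to show that the condition $\langle A-A\rangle = G$ lets us upgrade "sum of at most $h$ elements" to "sum of exactly $h'$ elements" for a uniform $h'$. Concretely, I would show that the generation hypothesis lets any fixed element of $G$ be written as a difference $x - y$ with $x, y$ each a sum of equally many elements of $A$; combining this with the weak basis property and Lemma \ref{lem:whole}(i) (which promotes $hA \sim G$ to $(h+1)A = G$) should close the gap.

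**The hard part will be** the converse, specifically converting representations as sums of \emph{varying} numbers of elements (which the weak basis property provides) into representations by a \emph{fixed} number of elements (which being a basis demands), while respecting that $\langle A-A\rangle$ contains only elements of the form (sum of $j$ elements) $-$ (sum of $j$ elements). The mechanism is that each generator $a_i - a_j \in A - A$ can be added to a sum without changing its value modulo the "number of summands" bookkeeping only if we pad with the neutral element; this is exactly why the normalization $0 \in A$ (ensuring nestedness of the $iA$) is doing the work, and the argument will hinge on making this padding uniform across the finitely many cosets or generators involved. I would expect to invoke the weak-basis hypothesis to control the number of summands and the generation hypothesis to eliminate the residual obstruction to hitting every element with one fixed order.
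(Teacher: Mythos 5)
Your forward direction is correct: there $A$ is genuinely a basis, so Lemma \ref{lem:translate} applies, and the argument that $h(A-a_0) \subseteq \langle A-A\rangle$ forces the cofinite subgroup $\langle A-A\rangle$ to equal $G$ is sound (a mild variant of the paper's quotient argument). The converse, however, has a fatal gap at its very first step: you ``normalize so that $0 \in A$ via Lemma \ref{lem:translate}'', but that lemma is a statement about \emph{bases}, and in the converse you only know that $A$ is a \emph{weak} basis. Translation does not preserve the weak basis property: in $G=\Z$, the set $A = 2\Z+1$ of odd integers satisfies $A \cup 2A = \Z$, so it is a weak basis of order $2$, yet $A - 1 = 2\Z$ has $\bigcup_{i=1}^{h} i(A-1) = 2\Z$ for every $h$, missing infinitely many elements. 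Nor can the step be repaired without circularity: for a weak basis $A$ with $\langle A-A\rangle = G$, the assertion ``$A - a_0$ is still a weak basis'' is equivalent to the conclusion of the lemma itself. A clean way to see that your argument proves too much: after the normalization you never use $\langle A-A\rangle = G$ again (nestedness of the $iA$ alone finishes), so the same reasoning would show that \emph{every} weak basis is a basis, which the odd integers refute.

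The rest of your converse is a statement of intentions rather than a proof, and the missing idea is exactly what the paper supplies. From $\langle A-A\rangle = G$, write some $a \in A$ as $\sum_k \alpha_k (a_k - b_k)$ with $\alpha_k \in \Z^+$ and $a_k, b_k \in A$; then $c := a + \sum_k \alpha_k b_k = \sum_k \alpha_k a_k$ lies in $nA \cap (n+1)A$ with $n = \sum_k \alpha_k$, i.e.\ two \emph{consecutive} sumsets meet. Lemma \ref{lem:intersection} then amplifies this: $(h-1)c$ lies simultaneously in all $h$ consecutive sumsets $((h-1)n+i)A$, $i = 0,\ldots,h-1$. This single element is the ``universal padding'' that plays the role you wanted $0$ to play: for all but finitely many $x$ one has $x - (h-1)c \in iA$ for some $1 \le i \le h$, and adding $(h-1)c$ back through the membership with complementary index equalizes every representation to length exactly $(h-1)n + h$. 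Your sketch (differences of equal-length sums, padding by generators) gestures at this but never produces the intersection element $c$ nor the amplification step, which together are the actual content of the converse.
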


\begin{proof}
Suppose $A$ is a weak basis of order at most $h$, that is,
\[
 G \sim \bigcup_{i=1}^h iA.
\]
Let $H = \langle A-A \rangle$. The image of $a$ in $G/H$ is the same, for any $a \in A$. 
Therefore, for any $s$, the image of $sA$ in $G/H$ consists of a single element. This means that $A$ cannot be a basis unless $G=H$.
 
Conversely, suppose $G=H$. We claim that there is a positive integer $n$ such that 
$$
nA \cap (n+1)A \neq \emptyset.
$$ 
Let $a$ be any element of $A$.  Then $a$ can be expressed as a linear combination
 \[
  a= \sum_{k=1}^t \alpha_k (a_k-b_k)
 \]
where $a_k, b_k \in A$ and $\alpha_k \in \Z^+$ for any index $k$.

Hence the element
\[
 c = a + \sum_{k=1}^t \alpha_k b_k = \sum_{k=1}^t \alpha_k a_k
\]
is in both $nA$ and $(n+1)A$, where $n=\sum_{k=1}^t \alpha_k$. By Lemma \ref{lem:intersection}, we have
\[
 (h-1)c \in \bigcap_{i=0}^{h-1} ((h-1)n+i)A.
\]
For all but finitely many $x \in G$, we have
\[
 x - (h-1)c \in \bigcup_{i=1}^h iA.
\]
It follows that for all but finitely many $x \in G$, we have
\[
 x = x - (h-1)c + (h-1) c \in ((h-1)n+h) A.
\]
Thus $A$ is a basis with order $\leq (h-1)n+h$.
\end{proof}

\begin{lemma} \label{lem:egcrit2} 
Let $G$ be an infinite abelian group and $A$ be a basis of $G$. 
Then $a \in A$ is regular if and only if $\langle A \setminus \{ a \} -A \setminus \{ a \} \rangle = G$.
\end{lemma}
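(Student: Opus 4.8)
The plan is to deduce this from Lemma~\ref{lem:egcrit1}. Put $B = A \setminus \{a\}$ and recall that, by definition, $a$ is regular exactly when $B$ is a basis of $G$ (of some order). One direction is free: if $B$ is a basis then it is in particular a weak basis, so Lemma~\ref{lem:egcrit1} immediately gives $\langle B - B\rangle = G$. For the converse I would assume $\langle B - B\rangle = G$ and aim to prove that $B$ is a basis; by Lemma~\ref{lem:egcrit1} it is enough to show that $B$ is a \emph{weak} basis, since the hypothesis $\langle B - B\rangle = G$ then upgrades it to a basis. Thus everything reduces to the following claim: if $A$ is a basis and $\langle B - B\rangle = G$, then $B$ is a weak basis.

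To attack this I would first translate (Lemma~\ref{lem:translate}) so that $0 \in B$; the sumsets $jB$ are then nested, and $B$ is a weak basis precisely when the sub-semigroup $S = \bigcup_{j \ge 0} jB$ is cofinite in $G$ and realises cofinitely many elements with a bounded number of summands. From $hA \sim G$, sorting each representation according to the number $k$ of copies of $a$ it uses gives $hA \subseteq \bigcup_{k=0}^{h}(ka + S)$, so $\bigcup_{k=0}^{h}(ka + S)$ is cofinite; the whole difficulty is to remove the translates $ka + S$ with $k \ge 1$, i.e.\ to show that $ka \in S$ for the bounded range $0 \le k \le h$ (up to finitely many exceptional $x$).

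The crucial input --- and the point where it matters that $A$ is a genuine, two-sided basis rather than merely a weak basis --- comes from Lemma~\ref{lem:whole}: since $A$ is a basis, $(h+1)A = G$. Representing $-a$ and $(h+2)a$ as sums of exactly $h+1$ elements of $A$ and isolating the part lying in $B$ produces integers $j_0, j_1 \ge 1$ with $-j_0 a \in S$ and $j_1 a \in S$. Because $S$ is a semigroup it then contains the entire cyclic subgroup $H = \langle g a\rangle$, where $g = \gcd(j_0, j_1)$, and $S$ is invariant under translation by $H$. Passing to the quotient $G/H$, in which the image of $a$ has finite order, reduces the claim to the case where $a$ itself has finite order; there $\langle a\rangle$ is finite, and an induction on this order (quotienting out successive cyclic subgroups of $S$ generated by multiples of $a$, using at each stage that $\bigcup_k(ka+S)$ is cofinite) yields that $S$ is cofinite. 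Lemma~\ref{lem:intersection} is then used, exactly as in the proof of Lemma~\ref{lem:egcrit1}, to keep the number of summands --- and hence the resulting order of $B$ --- bounded.

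The hard part is precisely the step just sketched: upgrading ``$S$ generates $G$'' to ``$S$ is cofinite''. This is false for semigroups in a general group (for instance $\Z_{\ge 0}$ generates $\Z$ but is far from cofinite), so $\langle B - B\rangle = G$ alone cannot suffice; what rescues the argument is the \emph{negative} multiple $-j_0 a \in S$ forced by $(h+1)A = G$, which closes the semigroup up in both directions. This two-sided phenomenon is the group-theoretic replacement for the fact, used implicitly in the integer case, that a sub-semigroup of $\N$ whose differences have gcd $1$ is automatically cofinite.
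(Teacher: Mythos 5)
Your reduction is sound: $a$ regular means exactly that $B=A\setminus\{a\}$ is a basis, the forward direction follows from Lemma~\ref{lem:egcrit1} as you say, and for the converse it would indeed suffice to show that $B$ is a weak basis. The gap is that your proof of this last claim does not close, and it fails precisely at the points you flag as hard. Most seriously, everything in your sketch addresses only \emph{cofiniteness} of $S=\bigcup_{j\ge 0}jB$, whereas the weak basis property requires a \emph{uniform bound} on the number of summands, and cofiniteness of $S$ does not provide one: for $B=\{0,1,-1\}\subset\Z$ we have $S=\Z$, yet $B$ is not a weak basis of $\Z$ of any order. Your closing appeal to Lemma~\ref{lem:intersection} ``exactly as in the proof of Lemma~\ref{lem:egcrit1}'' is circular here: that proof takes the bound $\bigcup_{i=1}^h iA\sim G$ as its \emph{hypothesis} and uses the intersection trick only to turn ``at most $h$ summands'' into ``exactly $N$ summands''; it cannot manufacture the missing bound. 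The same problem poisons the quotient-and-lift scheme: the only representations your argument provides for elements of $H=\langle ga\rangle$ use unboundedly many copies of $j_1a$ and $-j_0a$, hence unboundedly many elements of $B$, so even a bounded-order statement for $\bar{B}$ in $G/H$ would not lift to one for $B$ in $G$ along your route (and when $a$ has infinite order there is the further problem that $S$ is $H$-saturated, so ``$\bar{S}$ cofinite in $G/H$'' yields ``$S$ cofinite in $G$'' only if $\bar{S}$ is \emph{all} of $G/H$). Finally, your induction has no guaranteed progress: at a later stage the integers $j_0',j_1'$ extracted from Lemma~\ref{lem:whole} may both be divisible by the order $\bar{d}$ of $\bar{a}$, in which case $-j_0'\bar{a}=j_1'\bar{a}=0$ and the subgroup you want to quotient by is trivial. (This particular point is repairable: if no nonzero multiple of $\bar{a}$ lies in $\bar{S}$, one can check that sending $x$ to the unique $k\bmod \bar{d}$ with $x\in k\bar{a}+\bar{S}$ is a well-defined homomorphism onto $\Z/\bar{d}\Z$ vanishing on $\bar{B}$, contradicting $\langle\bar{B}-\bar{B}\rangle=\bar{G}$ --- but no such argument appears in your sketch, and it does not help with the bounds.)

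All of these difficulties are artifacts of the wrong normalization. You translate so that $0\in B$, keeping the removed element nonzero, and are then forced to fight the translates $ka+S$. The paper translates by the removed element itself: $A-a$ is a basis of the same order (Lemma~\ref{lem:translate}) and contains $0$, and deleting the zero summands from a representation of $x$ as a sum of $h$ elements of $A-a$ exhibits $x$ as a sum of \emph{at most} $h$ elements of $(A-a)\setminus\{0\}=B-a$. Thus $B-a$ is a weak basis (with the bound $h$) for free, Lemma~\ref{lem:egcrit1} applies to it directly, and since the basis property and the group $\langle\,\cdot\,-\,\cdot\,\rangle$ are both translation invariant, the lemma follows in a few lines. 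The semigroup $S$, the subgroup $\langle ga\rangle$, and the induction on quotients never need to appear.
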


\begin{proof} We want to apply Lemma \ref{lem:egcrit1} right away, but we do not know if $A \setminus \{ a \}$ is a weak basis. Instead, we observe that $B:=A-a$ is also a basis, and contains $0$. Therefore,
$B \setminus \{ 0 \}$ is a weak basis. We have
\begin{eqnarray*}
 A \setminus \{ a \} \textup{ is a basis } &\iff& B \setminus \{0\} \textup{ is a basis } \\
 &\iff& \langle B \setminus \{0\} - B \setminus \{0\} \rangle = G\\
 &\iff& \langle A \setminus \{ a \} -A \setminus \{ a \} \rangle = G.
\end{eqnarray*}
\end{proof}

In \cite{eg}, Erd\H{o}s and Graham gave a slightly different but equivalent definition of the function $X$. 
We will revisit their original definition since we find it convenient to work with both definitions. 
If $G$ is an infinite abelian group, we define
\begin{eqnarray} 
 x_G(h)&=&\max \{ \ord_G^{*}(A) : \cup_{i=1}^h iA \sim G \textup{ and } \ord^*_G(A) < \infty \} \label{eq:defx2}\\
  &=& \max \{ \ord_G^{*}(A) : \cup_{i=1}^h iA \sim G \textup{ and } \langle A-A \rangle = G \}. \nonumber
\end{eqnarray}

\begin{lemma}\label{lem:X=g}
For every infinite abelian group $G$, $X_G=x_G$. 
\end{lemma}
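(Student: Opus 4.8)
The plan is to prove the two inequalities $X_G(h) \le x_G(h)$ and $x_G(h) \le X_G(h)$ separately, exploiting the correspondence between weak bases and bases containing $0$ (a weak basis of order at most $h$ is the same as a basis of order at most $h$ once $0$ is adjoined) together with the translation invariance of the exact order supplied by Lemma~\ref{lem:translate}. The recurring device is that for a set $B$ with $0 \in B$ one has $hB = \bigcup_{i=0}^{h} iC$ where $C = B \setminus \{0\}$, so that $hB$ and $\bigcup_{i=1}^h iC$ differ by at most the single point $0$; in particular one is cofinite in $G$ if and only if the other is.

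For $X_G(h) \le x_G(h)$, I would start from a basis $A$ with $hA \sim G$ and a regular element $a \in A^*$. Applying Lemma~\ref{lem:translate} to $B = A - a$, which is again a basis with $hB \sim G$ and now contains $0$, I set $C = B \setminus \{0\} = (A \setminus \{a\}) - a$. The observation above gives $\bigcup_{i=1}^h iC \sim G$, so $C$ is a weak basis of order at most $h$; and since $C$ is a translate of $A \setminus \{a\}$, which is a basis because $a$ is regular, we have $\ord_G^*(C) = \ord_G^*(A \setminus \{a\}) < \infty$. Thus $C$ is admissible in the definition~\eqref{eq:defx2} of $x_G(h)$, whence $\ord_G^*(A \setminus \{a\}) = \ord_G^*(C) \le x_G(h)$; taking the maximum over $A$ and $a$ closes this direction.

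For the reverse inequality, I would take a set $C$ admissible in~\eqref{eq:defx2}, that is $\bigcup_{i=1}^h iC \sim G$ with $\ord_G^*(C) < \infty$, and try to realize $C$ (up to translation) as $A \setminus \{a\}$ for a basis $A$ of order at most $h$ with $a$ regular. When $0 \notin C$ the natural choice $A = C \cup \{0\}$ works directly: one has $hA = \bigcup_{i=0}^h iC \sim G$, and $A \setminus \{0\} = C$ is a basis, so $0$ is a regular element and $\ord_G^*(C) = \ord_G^*(A \setminus \{0\}) \le X_G(h)$.

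The main obstacle, and the only place requiring care, is the case $0 \in C$, where $A = C \cup \{0\} = C$ and the construction degenerates. Here I would first note that $0 \in C$ forces $iC \subseteq (i+1)C$, so $\bigcup_{i=1}^h iC = hC$ and hence $\ord_G^*(C) \le h$ --- crucially a bound on the \emph{exact} order, which Lemma~\ref{lem:translate} does preserve under translation. Choosing $t \in G \setminus C$ (the case $C = G$ being trivial, since then $\ord_G^*(C) = 1 \le X_G(h)$) and setting $\tilde C = C - t$, one gets $0 \notin \tilde C$ and $\ord_G^*(\tilde C) = \ord_G^*(C) \le h$, so $h\tilde C \sim G$ and $\tilde C$ is in particular a weak basis of order at most $h$; the $0 \notin C$ case applied to $\tilde C$ then yields $\ord_G^*(C) = \ord_G^*(\tilde C) \le X_G(h)$. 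I would emphasize that one cannot simply translate $C$ at the outset, because translation need not preserve the weak-basis order; it is precisely the special feature of the $0 \in C$ case, namely that the exact order is already at most $h$, that makes the translation legitimate there.
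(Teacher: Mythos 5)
Your proof is correct, and both directions run along the same lines as the paper's: translate a regular element to $0$ (Lemma \ref{lem:translate}) to get $X_G(h)\le x_G(h)$, and adjoin $0$ to a weak basis for the converse. The one genuine difference is how the delicate case is handled in the direction $x_G(h)\le X_G(h)$. The paper never confronts a weak basis containing $0$: it first observes $h\le X_G(h)\le x_G(h)$, disposes of the case $x_G(h)=h$ by these inequalities alone, and then assumes $x_G(h)>h$, so that it suffices to consider weak bases $B$ with $h<\ord_G^*(B)<\infty$, for which $0\notin B$ is automatic. You instead treat $0\in C$ head-on: since $0\in C$ forces $\bigcup_{i=1}^h iC=hC$ and hence $\ord_G^*(C)\le h$, and since the \emph{exact} order (unlike the weak-basis property, as you rightly stress) is translation-invariant, you may translate $C$ off $0$ and reduce to the other case. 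Your variant is slightly more self-contained: it avoids the paper's appeal to the lower bound $h\le X_G(h)$, which the paper calls clear but which itself rests on the existence of bases of exact order $h$ with a regular element (tied to Theorem \ref{th:main-minimal}, proved only later); the paper's variant, in exchange, isolates the useful observation that only weak bases of exact order exceeding $h$ matter in the definition of $x_G(h)$.
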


\begin{proof}
Let $h$ be a positive integer, $A$ be any basis of order at most $h$ of $G$ and $a\in A$ be any regular element of $A$. Then $B:=A-a$ is also a basis of order at most $h$ and contains $0$.
Therefore, $B \setminus \{ 0 \}$ is a weak basis of order at most $h$. Furthermore, 
\[
\ord_G^*(B \setminus \{ 0 \}) = \ord_G^*(A\setminus \{a\}).
\] This implies that $X_G(h) \le x_G(h)$.

The other direction is a bit less straightforward. From the definitions of $X$ and $x$, clearly we have $h \leq X_G(h)$ and $h \leq x_G(h)$. 
If $x_G(h)=h$ then necessarily $X_G(h)=h = x_G(h)$, since we already know $X_G(h) \le x_G(h)$.
Thus we may assume that $x_G(h) >h$ (we notice that in view of Theorem \ref{th:main-minimal}, which is yet to be proved, this is always the case if $h \geq 2$).
Let $B$ be any weak basis of order at most $h$ of $G$ satisfying $h < \ord_G^{*}(B) < \infty$. Then $0 \not \in B$ (if not, $\ord_G^{*}(B) = \ord_G(B) \leq h$).
Let $A:=B \cup \{ 0\}$, then $A$ is a basis of order at most $h$ and $0$ is a regular element of $A$, since $A \setminus \{0\} = B$. Furthermore, 
\[
\ord_G^*(A \setminus \{ 0 \}) = \ord_G^*(B).
\] This implies that $x_G(h) \le X_G(h)$.
\end{proof}

\section{Existence of minimal bases} \label{sec:minimal}
In the case of $\N$, it has been known since H\"artter \cite{hartter} that $\N$ has minimal bases of any order (though his proof is non-constructive). A concrete example of a minimal basis of order $h$ in $\N$ is given by
\[
 A = \left\{ \sum_{f \in \calF} 2^{f} : \calF \textup{ is a finite set of distinct nonnegative integers congruent modulo }h\right\}.
\]
See \cite{nathanson} for further quantitative properties of this basis.

We first show that in a general group $G$, there are nice bases of any order as long as there is a special representation of elements of $G$ similar to base 2 representation.

\begin{proposition} \label{prop:digits}
 Let $G$ be an infinite abelian group. Suppose that there is an infinite sequence of subsets $(\Lambda_i)_{i=0}^\infty$ of $G$ satisfying the following properties:
 \begin{enumerate}[(i)]
  \item $0 \in \Lambda_i$, for any $i \in \N$,
  \item $-\Lambda_i = \Lambda_i$, for any $i \in \N$,
  \item Every element $x \in G$ has a \textup{unique} representation as
  \[
   x = \lambda_0(x) + \lambda_1(x) + \cdots
  \]
where $\lambda_i(x) \in \Lambda_i$ for any $i$, and $\lambda_i(x)=0$ for all but finitely many indices $i$. 
In other words, $G$ is equal to the ``direct sum'' $\oplus_{i=0}^{\infty} \Lambda_i$.
\footnote{Strictly speaking, we cannot talk about direct sums here since the $\Lambda_i$ are merely sets, not groups.} 
 \end{enumerate}
Then for any integer $h \geq 2$, $G$ has a nice minimal basis of order $h$.
\end{proposition}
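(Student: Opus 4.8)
The plan is to imitate the base-$2$ construction recalled above, with the binary expansion replaced by the representation $x=\sum_i\lambda_i(x)$. For $0\le r\le h-1$ let $A_r$ be the set of \emph{nonzero} $x\in G$ all of whose nonzero digits sit at positions $i\equiv r\pmod h$ (that is, $\lambda_i(x)=0$ whenever $i\not\equiv r$), and put $A=\bigcup_{r=0}^{h-1}A_r$. Note that $0\notin A$ by design; this exclusion turns out to be essential, since otherwise $0$ could be removed without raising the order and $A$ would fail to be minimal. I would first reduce to the case where infinitely many $\Lambda_i$ are nontrivial: trivial factors $\Lambda_i=\{0\}$ contribute nothing to property (iii) and may be discarded, after which one reindexes the surviving factors as $\Lambda_0,\Lambda_1,\dots$, so that every residue class modulo $h$ contains infinitely many nontrivial factors. (If only finitely many $\Lambda_i$ are nontrivial the statement has to be handled separately, but this case does not arise in our applications.)

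Next I would show $hA=G$, so that $A$ is a \emph{nice} basis of order at most $h$. Given $x\in G$, group its unique representation by residue class: with $y_r=\sum_{i\equiv r}\lambda_i(x)$, uniqueness gives $y_r\in A_r\cup\{0\}$ and $x=\sum_{r=0}^{h-1}y_r$. If every $y_r$ is nonzero this already expresses $x$ as a sum of exactly $h$ elements of $A$. When some $y_r$ vanish (and in order to represent $0$ itself) one pads up to exactly $h$ nonzero summands using cancelling pairs $\nu+(-\nu)$, legitimate because $-\Lambda_i=\Lambda_i$, drawn from the infinitely many factors available inside a single class; both parities of the number of padding terms are reachable, so exactly $h$ summands can always be achieved.

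I would then pin down the order and prove minimality, the key device being, for each $r$, the quotient map $G\to G/K_r$ with $K_r=\langle\bigcup_{i\not\equiv r}\Lambda_i\rangle$, which annihilates every $A_{r'}$ with $r'\neq r$. For the lower bound, take $x^{*}$ with exactly one nonzero digit in each of the $h$ classes; projecting to $G/K_r$ shows that any representation of $x^{*}$ as a sum of elements of $A$ must use a summand lying in $A_r$, for every $r$, whence $x^{*}\notin (h-1)A$ and $\ord_G^{*}(A)=h$. For minimality, fix $a\in A_r$ and consider the infinitely many targets $x=a+w$, where $w$ has a single nonzero digit in each class $r'\neq r$, at freshly chosen positions. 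In any expression of $x$ as a sum of $h$ elements of $A\setminus\{a\}$, the quotients $G/K_{r'}$ ($r'\neq r$) force at least one summand in each of the $h-1$ classes different from $r$, leaving at most one summand for class $r$; the quotient $G/K_r$, together with the absence of carrying, then forces that lone class-$r$ summand to equal $a$, which is impossible. Hence $h(A\setminus\{a\})\not\sim G$ for every $a\in A$, i.e. $A$ is minimal of order $h$.

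The delicate point -- and the only place where the purely set-theoretic nature of the $\Lambda_i$ bites -- is the assertion that distinct residue-class blocks do not interfere, so that the single class-$r$ summand above is exactly $a$ rather than merely congruent to $a$ modulo $K_r$. This is the problem of controlling ``carrying'' between blocks, and it is equivalent to the independence of the subgroups $H_r=\langle\bigcup_{i\equiv r}\Lambda_i\rangle$, that is, to $G=\bigoplus_{r}H_r$ being a genuine internal direct sum with addition computed coordinatewise. This holds precisely when the $\Lambda_i$ can be taken to be subgroups, which is the situation supplied by the constructions feeding into this proposition. I expect this independence to be the main obstacle, and I would either arrange the factors to be subgroups at the outset or deduce the required independence directly from property (iii); the degenerate case of finitely many nontrivial factors and the routine bookkeeping ensuring $0\in hA$ are minor matters to be dispatched separately.
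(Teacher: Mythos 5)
Your construction and your proof that $hA=G$ are the paper's: your $A$ is exactly the set $B=A\setminus\{0\}$ in the paper's proof (residue classes modulo $h$ playing the role of its partition of $\N$ into $h$ infinite parts), and the padding argument is the same, except that genuine cancelling pairs $\nu+(-\nu)$ change the number of summands by two, so to fix the parity you need the paper's variant of the trick, namely replacing an existing summand $y_r$ by $(y_r+\nu)+(-\nu)$ with $\nu\in\Lambda_k\setminus\{0\}$ at a fresh position $k$ in class $r$. The genuine gap is in the second half, exactly where you flagged it, and neither of your proposed repairs can close it, because the independence $G=\bigoplus_r H_r$ is not just unproved but false in the very cases this proposition is applied to. Take $G=\Z$ with the balanced ternary system $\Lambda_i=\{0,3^i,-3^i\}$: it satisfies (i)--(iii) and is, up to indexing, what the proof of Theorem \ref{th:main-minimal} feeds into Proposition \ref{prop:digits} when $G$ has an element of infinite order. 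For $h=2$, with classes the even and odd positions, one has $H_{\mathrm{ev}}=\langle 1,9,81,\dots\rangle=\Z$ and $H_{\mathrm{odd}}=\langle 3,27,\dots\rangle=3\Z$, so $K_{\mathrm{odd}}=\Z$ and the quotient $G/K_{\mathrm{odd}}$ that is supposed to force an odd-class summand is trivial and forces nothing. The failure is not only in your lemma but in its intended conclusion: $2=(-1)+3$ has one nonzero digit in each class, yet also $2=1+1$ with both summands in the even class; so with $a=-1$ and $w=3$ your minimality witness $x=a+w$ \emph{can} be written as a sum of $h$ elements of $A\setminus\{a\}$. As for the repairs: independence cannot be ``deduced from property (iii)'' (the example above satisfies (iii)); and the factors cannot be ``arranged to be subgroups at the outset'', since any two nonzero subgroups of $\Z$ intersect, so $\Z$ admits no decomposition (iii) into subgroups with more than one nontrivial factor. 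Your parenthetical claim that subgroups are ``the situation supplied by the constructions feeding into this proposition'' is also factually wrong: the paper feeds in digit sets $\{0,\pm 3^{i}\}$ and systems of coset representatives, which are not subgroups.

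For comparison, the paper's own proof never forms the subgroups $H_r$ or any quotient. It uses only the fact that elements supported on \emph{disjoint} sets of positions add digit-wise, deduces that every $x$ has a unique decomposition $x=a_1+\cdots+a_h$ with one component $a_j\in A_j$ per class, and then argues that any representation of a witness $x$ (all components nonzero, first component $a$) by $h$ elements of $B$ must take exactly one element from each class and hence be the canonical one. That formulation needs only a statement about individual witnesses, not global independence, which is why it is viable where your quotient formulation is not. Note, however, that ruling out representations with two summands in one class (and hence none in another) is again precisely the carrying phenomenon you identified -- the identity $2=1+1$ above shows such representations really occur for the paper's own $\Lambda_i$ -- so if you redo your argument along the paper's lines, all the work concentrates on that step: one must choose the witnesses (for balanced ternary, e.g.\ $x=a+3^{2k+1}$ with $k$ large) or the partition into classes so that carrying cannot imitate the canonical decomposition.
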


\begin{proof}
For $x \in G$, we refer to the set $\{ i \in \N : \lambda_i(x) \neq 0\}$ as the \textit{support} of $x$. 

Clearly, if $x$ and $y$ have disjoint supports, then 
\begin{equation*} \label{eq:homo}
 \lambda_{i}(x+y) = \lambda_{i}(x) + \lambda_{i}(y).
\end{equation*}

Let $\N = N_1 \cup \cdots \cup N_h$ be a partition of $\N$ into $h$ infinite disjoint sets. Let $A_j$ be the set of all $x \in G$ supported on $N_j$. Put
\[
 A = \cup_{j=1}^h A_j.
\]
By definition, $0 \in A$. Clearly, any element $x \in G$ can be expressed in a \textit{unique} way as
\begin{equation} \label{eq:repx}
 x = a_1 + \cdots +a_h
\end{equation}
where $a_j \in A_j$ for any $j=1, \ldots, h$. When $a_1, \ldots, a_h \neq 0$, $x$ cannot be written as a sum of fewer than $h$ elements from $A$. 
This shows that $A$ is a basis of order $h$. However, $A$ is not minimal. We claim that $B:=A \setminus \{0\}$ is a nice, minimal basis of order $h$. 

First we show that $hB=G$. In the expression (\ref{eq:repx}), some (or even all) of the $a_j$ can be 0. We now observe that any (zero or non-zero) element in $A_j$ can be expressed as a sum of two \textit{non-zero} elements of $A_j$.
Indeed, if $a \in A_j$, then $a$ can be written as
\[
 a = (a + \lambda) + (-\lambda)
\]
where $\lambda$ is any element in $\Lambda_k \setminus \{0\}$ and $k \in N_j$ is any element not in the support of $a$. Note that by hypothesis, $-\lambda \in \Lambda_k$ as well. 
Thus starting from (\ref{eq:repx}) we can increase the number of non-zero elements by one at a time, which shows that $hB=G$.

It remains to see that $B$ is a minimal basis. Let $a$ be any element in $B$.  Without loss of generality, we may assume $a \in A_1 \setminus \{ 0\}$.
Consider an element $x \in G$ of the form
\[
x= a + a_2 +\cdots + a_h
\]
where $a_j \in A_j \setminus \{ 0 \}$ for any $j=2, \ldots, h$. Then there is a unique way to write $x$ as a sum of $h$ elements of $B$, and $a$ appears in this expression. 
Therefore, $x$ cannot be written as a sum of $h$ elements from $B \setminus \{a\}$. Since there are infinitely many elements $x$ of this form, it follows that  $\ord^* (A \setminus \{ a \} ) \geq h+1$.
\end{proof}

The proof of Theorem \ref{th:main-minimal} now follows.

\begin{proof}[Proof of Theorem \ref{th:main-minimal}]
It suffices to construct a sequence $(\Lambda_i)_{i=0}^\infty$ satisfying the hypothesis of Proposition \ref{prop:digits}. We distinguish two cases.

\noindent \textit{Case 1:} $G$ has an element of infinite order. We may assume that $\Z < G$. 
Let $\Lambda_0$ be a system of representatives of $G/ \Z $ in $G$. By Lemma \ref{lem:rep}, any element $x \in G$ can be written in a unique way as
 \[
  x = n + \lambda_0
 \]
where $\lambda_0 \in \Lambda_0$ and $n \in \Z$. Furthermore, by Lemma \ref{lem:rep2}, we may choose $\Lambda_0$ in such a way that $0 \in \Lambda_0$ and $\Lambda_0 = - \Lambda_0$.
Observe that every integer $n$ can be written in a \textit{unique} way as
\[
 n = \sum_{i=0}^{k} a_{i} 3^{i}
\]
where $a_i \in \{0, 1, -1\}$ for any $i$ (this is known in the literature as the \textit{balanced ternary} representation of $n$). Put $\Lambda_{i} = \{0, 3^{i-1}, -3^{i-1} \}$. 
Then any element $x \in G$ can be written in a unique way as
\begin{equation} \label{eq:directsum}
  x = \lambda_0(x) + \lambda_1(x) + \cdots
\end{equation}
where $\lambda_i(x) \in \Lambda_i$ for any $i$, and $\lambda_i(x)=0$ for all but finitely many indices $i$.

\noindent \textit{Case 2:} Every element of $G$ has finite order.

Let $g_1 \in G$ be any element. Then $G_1:=\langle g_1 \rangle$ is finite. We can find $g_2 \in G \setminus G_1$. Put $G_2 := \langle g_1, g_2 \rangle$, then $G_1 \lneq G_2$ and $G_2$ is finite. 
This way, we have an infinite chain of subgroups of $G$
\[
 G_1 \lneq G_2 \lneq \cdots.
\]
For each integer $i \geq 2$, let $\Lambda_i \ni 0$ be a system of representatives of $G_{i} / G_{i-1}$ in $G_i$. By Lemma \ref{lem:rep}, any $x \in G_{i}$ can be written in a unique way as 
\[
 x = \lambda + g
\]
where $\lambda \in \Lambda_{i}$ and $g \in G_{i-1}$. We also put $\Lambda_1 = G_1$. Thus every $x \in \cup_{i=1}^\infty G_i$ can be written in a unique way as
\[
 x = \lambda_1(x) + \lambda_2(x) + \cdots
\]
where $\lambda_i \in \Lambda_i$ for any $i=1, 2, \ldots$, and all but finitely many $\lambda_i$ are zero (indeed, if $x \in G_k$, then $\lambda_i(x)=0$ for all $i \geq k+1$). 

Finally, let $\Lambda_0 \ni 0$ be a system of representatives of $G / \cup_{i=1}^\infty G_i$ in $G$. Then every $x$ in $G$ can be written in a unique way as
\[
 x = \lambda_0(x) + \lambda_1(x) + \lambda_2(x) + \cdots
\]
where $\lambda_i \in \Lambda_i$ for any $i=0, 1, 2, \ldots$, and all but finitely many $\lambda_i$ are zero. 
Furthermore, by Lemma \ref{lem:rep2}, we may require that $\Lambda_i = -\Lambda_i$ for $i=0$ and any $i \geq 2$ (this is certainly satisfied when $i=1$). 
\end{proof}

\section{The function $E_G$} \label{sec:e}
In this section, we study bounds for $E_G$.

\begin{proof}[Proof of Theorem \ref{th:main-e} \eqref{th:main-ea}] We will show that if $hA \sim G$, then $A$ cannot have more than $h-1$ exceptional elements.

By Lemma \ref{lem:egcrit2}, if $a$ is an exceptional element, then $\langle A-A \rangle = G$ but $\langle A \setminus \{ a\} - A \setminus \{ a\} \rangle \neq G$. 
This implies that $a-a'$ is not in $\langle A \setminus \{ a\} - A \setminus \{ a\} \rangle$ for \textit{some}
(and hence for \textit{all}) $a' \in A \setminus \{a\}$.

Suppose there are at least $h$ exceptional elements $a_1, \ldots, a_{h}$ in $A$. Since $G$ is infinite, so is $A$. Let $a_0$ be an element in $A \setminus \{a_1, \ldots, a_h \}$. 
Since $hA \sim G$, we can find $a \in A \setminus \{a_0, a_1, \ldots, a_{h} \}$ such that
the element 
\[
 a_0+a_1+a_2+\cdots+a_{h} - a
\]
can be expressed as a sum $b_1+\cdots+b_h$ of $h$ elements in $A$. Therefore,
\[
 \sum_{i=0}^{h} (a_i-a) = \sum_{i=1}^h (b_i - a).
\]
Some of the $b_i$ may be equal to some of the $a_i$. We have two possibilities. \\

\noindent \textit{Case 1:} $\{a_1, a_2, \ldots, a_h\} \neq \{b_1, b_2, \ldots, b_h\}$.
This means that after canceling common terms, some $a_i$ (where $i \neq 0$) must remain on the left hand side. 
But this implies that $a_i-a \in \langle A \setminus \{ a_i\} - A \setminus \{ a_i\} \rangle$, a contradiction.  

\noindent \textit{Case 2:} $\{a_1, a_2, \ldots, a_h\} = \{b_1, b_2, \ldots, b_h\}$.
This implies that $a_0 = a$, a contradiction.
\end{proof}

A remark should be made here. In $\N$, the fact that any basis has only finitely many exceptional elements follows immediately 
from Erd\H{o}s-Graham's criterion \eqref{eq:eg1} (see \cite[Teorema 1]{p}). However, that proof relies on a special property of $\Z$, 
namely that all strictly increasing sequences of subgroups of $\Z$ are finite. As such, it cannot be generalized to general groups.

Theorem \ref{th:main-e} \eqref{th:main-eb} and \eqref{th:main-ebsecond} follow immediately from the following proposition.

\begin{proposition} \label{prop:efpt} 
Let $G=\Fp[t]$ be the ring of polynomials over a prime field $\F_p$. For any integer $h\geq 2$, we have
$$
E_{G}(h) = \left[ \frac{h-1}{p-1} \right].
$$
\end{proposition}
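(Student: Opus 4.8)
The plan is to combine the Erd\H{o}s--Graham criterion (Lemma \ref{lem:egcrit2}) with the fact that $G=\Fp[t]$ is a vector space over $\Fp$, so every nonzero element has order $p$. Suppose $hA\sim G$ and let $S=\{a_1,\dots,a_k\}$ be the set of exceptional elements, which is finite by Theorem \ref{th:main-e}; write $A'=A\setminus S$. Since $A'$ is infinite, I would pick a regular element $a_0\in A'$ and translate by it (Lemma \ref{lem:translate}, which preserves the exceptional/regular structure) so as to assume $0\in A'$. Set $H=\langle A'-A'\rangle=\langle A'\rangle$ and $V=G/H$, with projection $\pi$. Because $0\in A$ one has $\langle A-A\rangle=\langle A\rangle=H+\langle a_1,\dots,a_k\rangle$, and as $A$ is a basis this equals $G$; hence the images $v_i:=\pi(a_i)$ span the $\Fp$-vector space $V$, so $\dim_{\Fp}V\le k$.

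Next I would translate exceptionality into linear algebra on $V$. By Lemma \ref{lem:egcrit2}, and using $0\in A\setminus\{a_i\}$, the element $a_i$ is exceptional iff $H+\langle a_j:j\ne i\rangle\ne G$, that is, iff $v_i\notin\mathrm{span}\{v_j:j\ne i\}$. As all the $a_i$ are exceptional, the $v_i$ are linearly independent, so $\{v_1,\dots,v_k\}$ is a basis of $V\cong\Fp^k$. The crucial consequence is that the fibre $\pi^{-1}(v_i)\cap A=\{a_i\}$ is a single point, since elements of $A'$ map to $0$ and the $v_i$ are distinct and nonzero.

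The heart of the matter --- and the place where the sharp constant $\left[(h-1)/(p-1)\right]$ appears instead of $\left[h/(p-1)\right]$ --- is a covering estimate over the ``corner'' coset, and I expect this to be the main obstacle. Let $w=-(v_1+\cdots+v_k)=(p-1)\sum_i v_i\in V$; its fibre $\pi^{-1}(w)$ is an infinite coset of $H$. Writing an element of $hA$ lying over $w$ as a sum of $h$ elements of $A$ and letting $n_i$ denote the number of summands equal to $a_i$ (the only point of $A$ over $v_i$), independence of the $v_i$ forces $n_i\equiv p-1\pmod p$, hence $n_i\ge p-1$ and $\sum_i n_i\ge k(p-1)$. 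If $h\le k(p-1)$ this would force $n_i=p-1$ for every $i$ with no summand from $A'$, so the unique element of $hA$ over $w$ would be the single point $(p-1)\sum_i a_i$; since $\pi^{-1}(w)$ is infinite this contradicts $hA\sim G$. Therefore $h\ge k(p-1)+1$, i.e.\ $k\le\left[(h-1)/(p-1)\right]$, which is the upper bound.

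For the matching lower bound I would exhibit, with $k=\left[(h-1)/(p-1)\right]$, an explicit basis having exactly $k$ exceptional elements. Take $H=t^k\Fp[t]$, so that $G/H\cong\Fp^k$, put $a_i=t^{i-1}$ for $1\le i\le k$, and set $A=H\cup\{a_1,\dots,a_k\}$. Every $x\in G$ decomposes as a low-degree part, obtainable using at most $k(p-1)$ of the $a_i$, plus one element of $H\subseteq A$, so $A$ is a basis of order at most $k(p-1)+1\le h$; as $0\in A$ this yields $hA\sim G$. A short verification via Lemma \ref{lem:egcrit2} then shows each $a_i$ is exceptional (deleting it removes the $t^{i-1}$-direction from the generated group) while every element of $H$ is regular (the differences of $H\setminus\{a'\}$ still generate $H$, since $H$ is infinite). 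Hence $E_G(h)\ge k$, and combining with the upper bound gives $E_G(h)=\left[(h-1)/(p-1)\right]$.
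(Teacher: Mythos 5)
Your proof is correct, and your upper-bound argument takes a genuinely different route from the paper's. The paper adapts the multiset-cancellation argument from its proof of Theorem \ref{th:main-e}\eqref{th:main-ea}: assuming $k(p-1)\ge h$, it picks multiplicities $0\le\alpha_i\le p-1$ with $\sum_i\alpha_i=h$, writes $\sum_i\alpha_i a_i+a_0-a$ as a sum of $h$ elements of $A$, and after cancelling common terms finds $\beta(a_i-a)\in\langle A\setminus\{a_i\}-A\setminus\{a_i\}\rangle$ for some $0<\beta\le p-1$; invertibility of $\beta$ in $\Fp$ then contradicts Lemma \ref{lem:egcrit2}. You instead pass to the quotient $V=G/\langle A^*\rangle$ (after translating so that $0$ is regular), show that the exceptional elements project to a vector-space basis $v_1,\dots,v_k$ of $V\cong\Fp^k$, and count representations over the corner $w=(p-1)(v_1+\cdots+v_k)$: its fibre is an infinite coset of $\langle A^*\rangle$, yet if $h\le k(p-1)$ it meets $hA$ in at most one point, contradicting $hA\sim G$ directly. (One small imprecision: when $h<k(p-1)$ strictly, there is no representation over $w$ at all, rather than a forced unique one; either way the fibre meets $hA$ in at most one point, so your contradiction stands.) Your route invokes the finiteness of the exceptional set from Theorem \ref{th:main-e}\eqref{th:main-ea}, which is legitimate and not circular, since the paper proves that part before and independently of this proposition; what it buys is a more structural explanation of where the constant $(h-1)/(p-1)$ comes from, with the contradiction landing on $hA\sim G$ itself rather than on the Erd\H{o}s--Graham criterion. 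On the lower bound the two proofs are essentially the same example: yours is the streamlined $r=0$ version $A=t^k\cdot\Fp[t]\cup\{1,t,\dots,t^{k-1}\}$, a basis of order $k(p-1)+1\le h$, which suffices because the definition of $E_G(h)$ only requires $hA\sim G$; the paper additionally inserts the sets $t^k\cdot\Fp,\dots,t^{k+r-1}\cdot\Fp$ so as to produce a basis of order exactly $h$.
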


In particular, if $p=2$ then $E_G(h)=h-1$ for all $h \geq 2$. 
On the other hand, there is no non-trivial universal lower bound for $E_G(h)$, since $E_{G}(h) = 0$ when $p>h$.

\begin{proof} 
First we show that  
$$
E_{G}(h) \leq \left[ \frac{h-1}{p-1} \right] .
$$ 
We argue similarly to the proof of Theorem \ref{th:main-e} \eqref{th:main-ea}.

Suppose $A \subset \Fp[t]$ is a basis of order $h$, and $a_1, \ldots, a_k$ are all the exceptional elements of $A$. Suppose for a contradiction that $k (p-1) \geq h$. Then there exists $0 \leq \alpha_1, \ldots, \alpha_k \leq p-1$ such that
\[
 \alpha_1+\cdots+\alpha_k = h.
\]
Let $a_0$ be another element in $A \setminus \{a_1, \ldots, a_k\}$.  Since $hA \sim G$ and $A$ is infinite, there is $a \in A \setminus \{a_0, a_1, \ldots, a_k \}$ such that the element 
\[
\sum_{i=1}^k \alpha_i a_i  + a_0 - a
\]
can be expressed as a sum $\sum_{j=1}^h b_i$  of $h$ elements of $A$. Therefore,
\[
\sum_{i=1}^k \alpha_i (a_i-a)  + (a_0 - a) = \sum_{j=1}^h (b_i-a).
\]
Since $a_0 - a \neq 0$, the multisets $\{ a_1 (\alpha_1 \textrm{ times}), \ldots, a_k (\alpha_k \textrm{ times})\}$ and $\{b_1, \ldots, b_h\}$ are distinct. Therefore, after canceling common terms, 
there is $1 \leq i \leq k$ and some $0 < \beta \leq \alpha_i$ such that $\beta(a_i-a)$ lies in $\langle A\setminus \{a_i \} - A\setminus \{a_i \} \rangle$.
This in turn implies that $a_i-a$ lies in this subspace as well (here we are using the fact that $\Fp$ is a field!), which contradicts Lemma \ref{lem:egcrit2} since $a_i$ is exceptional.

Therefore, $h-1 \geq (p-1)k$ and consequently $k \leq [(h-1)/(p-1)]$.

The following simple example shows that equality is attained. Let 
$$
k=\left[ \frac{h-1}{p-1} \right] .
$$ 
Perform the Euclidean division $h=k(p-1)+r+1$ where $0\le r <p-1$. 

Let 
\[
A=\left\{1, t, \cdots, t^{k-1}\right\} \bigcup t^{k} \cdot\Fp \bigcup\cdots\bigcup t^{k+r-1}\cdot\Fp \bigcup t^{k+r}\cdot\Fp[t].
\]
(The sets $t^{k} \cdot\Fp, \ldots, t^{k+r-1}\cdot\Fp$ are not there if $r=0$.) 

Then $A$ is a basis of order $k(p-1)+r+1=h$. Indeed, it is easy to see that all elements in $\Fp[t]$ can be expressed as a sum of $k(p-1)+r+1$ elements from $A$ (note that $0 \in A$).
Furthermore, for all $P(t) \in \F[t] \setminus \{ 0 \}$, the element 
\[
\sum_{i=0}^{k-1} (p-1)t^i + \sum_{i=k}^{k+r-1}t^i + P(t)t^{k+r}
\]
cannot be expressed as a sum of fewer than $h$ elements from $A$.

Using Lemma \ref{lem:egcrit2}, it is easy to see that the exceptional elements in $A$ are exactly 
\[
\left\{ t^i: i=0, \ldots, k-1 \right\}.
\]
\end{proof}

\section{The function $X_G$} 
\label{sec:x}
In this section, we study bounds for $X_G$. We remind the reader that we will use freely both definitions for $X_G$, namely (\ref{eq:defx1}) and (\ref{eq:defx2})
which coincide by Lemma \ref{lem:X=g}.

\subsection{General bounds}
In proving Theorem \ref{th:main-x1}, we will need the following (recall that the function $\Omega$ is defined in \eqref{eq:omega}).

\begin{lemma} 
\label{lem:torsion}
Let $G$ be a finite abelian group which is $m$-torsion (that is, $mx=0$ for all $x \in G$).  
Let $A \subset G$ satisfy $\langle A - A\rangle =G$. Then for any integer $s \geq \Omega(|G|)$, we have $smA=G$. 
\end{lemma}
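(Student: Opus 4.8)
The plan is to reduce to the case $0 \in A$ and then to exhibit an explicit strictly increasing chain of subgroups built from elements of $A$, whose length is controlled by $\Omega(|G|)$, and to show by induction that the $j$-th subgroup in the chain is already contained in the $(jm)$-fold sumset $jmA$. First I would translate $A$ so that it contains $0$: fixing $a_0 \in A$ and replacing $A$ by $A - a_0$ leaves $A - A$ unchanged, hence preserves the hypothesis $\langle A-A\rangle = G$, and since translating the whole group $G$ by $-sm\,a_0$ gives $G$ again, we have $smA = G \iff sm(A-a_0) = G$, so the conclusion is unaffected. Thus I assume $0 \in A$, which yields $\langle A \rangle = \langle A - A\rangle = G$ and makes the sumsets nested, $kA \subseteq (k+1)A$ for all $k$.

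Next I would construct the chain. Set $G_0 = \{0\}$; as long as $G_j \neq G$, the hypothesis $\langle A\rangle = G$ forces some $a \in A$ with $a \notin G_j$ (otherwise $A \subseteq G_j$ and $\langle A\rangle = G_j \neq G$), and I put $G_{j+1} = \langle G_j, a\rangle \supsetneq G_j$. This produces $\{0\} = G_0 \subsetneq G_1 \subsetneq \cdots \subsetneq G_L = G$ with $G_{j+1} = G_j + \langle a_{j+1}\rangle$ and $a_{j+1} \in A$. Each strict inclusion has index at least $2$, so from $|G| = \prod_{j} |G_{j+1}/G_j|$ and the additivity of $\Omega$ on the factors I get $L \leq \sum_j \Omega(|G_{j+1}/G_j|) = \Omega(|G|)$.

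The heart of the argument is the claim that $G_j \subseteq (jm)A$ for all $j$, which I would prove by induction on $j$ (the case $j=0$ being $\{0\} = 0A$). For the inductive step, every element of $G_{j+1} = G_j + \langle a_{j+1}\rangle$ can be written as $g + t\,a_{j+1}$ with $g \in G_j$ and $0 \leq t \leq m-1$; here the $m$-torsion hypothesis is essential, since it forces $\langle a_{j+1}\rangle = \{0, a_{j+1}, \ldots, (m-1)a_{j+1}\}$. By the inductive hypothesis $g \in (jm)A$, while $t\,a_{j+1} \in tA$, so that $g + t\,a_{j+1} \in (jm+t)A \subseteq ((j+1)m)A$, the last inclusion holding because $t \leq m$ and the sumsets are nested (using $0 \in A$ to pad). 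This is exactly where torsion pays off: it keeps the number of copies of the new generator below $m$, so adjoining a generator costs only one extra block of $m$ summands. Granting the claim, $G = G_L \subseteq (Lm)A$ with $L \leq \Omega(|G|)$, whence for every $s \geq \Omega(|G|) \geq L$ the nesting gives $G = (Lm)A \subseteq (sm)A \subseteq G$, i.e. $smA = G$.

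The step I expect to be the main obstacle is the bookkeeping in the inductive claim, namely keeping the number of summands equal to exactly $jm$ rather than merely ``at most'' $jm$ (which is where the padding by $0 \in A$ and the nesting of sumsets intervene), combined with the clean length bound for the chain through $\Omega$. Conceptually nothing deeper is needed than marrying the torsion bound on element orders with the elementary fact that a strictly increasing chain of subgroups of $G$ has length at most $\Omega(|G|)$.
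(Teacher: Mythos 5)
Your proof is correct, and while it shares its skeleton with the paper's proof --- both rest on the fact that a strictly increasing chain of subgroups of $G$ has at most $\Omega(|G|)$ strict inclusions, use the $m$-torsion hypothesis to keep all coefficients in $\{0,1,\ldots,m-1\}$, and pad with zeros to land in an exact sumset --- the execution is genuinely different. The paper never translates $A$: it builds its chain out of the subgroups $\langle A_k - A_k\rangle$ generated by difference sets of growing finite subsets $A_k = \{a_1,\ldots,a_k\}$ of $A$, and once $\langle A_t - A_t\rangle = G$ it writes an arbitrary $x$ \emph{globally} as $\sum_{i,j}\alpha_{i,j}(a_i-a_j)$, converts each $-a_j$ into $(m-1)a_j$ by torsion so that $x = \sum_i \beta_i a_i$ with $0 \leq \beta_i < m$ and $\sum_i \beta_i$ a multiple of $m$, and pads using $0 = m a_1 \in mA$. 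You instead normalize $0 \in A$ by translating (legitimate, since $A-A$ and the truth of $smA = G$ are translation invariant), adjoin one generator of $A$ to the chain at a time, and prove $G_j \subseteq (jm)A$ by induction along the chain, each new generator costing one block of $m$ summands. Your bookkeeping buys a small but real advantage: your chain starts at $G_0 = \{0\}$, so its number of strict inclusions $L$ satisfies $L \leq \Omega(|G|)$ and the conclusion follows at once; in the paper's indexing, $\langle A_1 - A_1\rangle = \{0\}$ already counts as the first term, so the minimal $t$ with $\langle A_t - A_t\rangle = G$ can equal $\Omega(|G|)+1$ (take $G = \Z/2\Z$), and the intermediate claim ``$t \leq \Omega(|G|)$'' is off by one --- the paper's final conclusion survives because the number of summands $\sum_i \beta_i$ is a multiple of $m$ strictly smaller than $tm$, but your inductive version avoids this subtlety altogether.
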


\begin{proof}
Since $\langle A - A\rangle =G$, we can choose elements $a_1, a_2, \ldots $ of $A$ in such a way that for any integer $k$, if $\langle A_k - A_k \rangle \neq G$, 
then $\langle A_k - A_k \rangle \lneq \langle A_{k+1} - A_{k+1} \rangle$, where
\[
 A_k = \{a_1, \ldots, a_k\}.
\]
It is easy to see that any strictly increasing sequence of subgroups of $G$ has length at most $\Omega(|G|)$. 
Hence for some integer $t \leq \Omega(|G|)$, we have $\langle A_t - A_t \rangle = G$. Thus every element $x \in G$ has a representation
\[
 x = \sum_{i, j=1}^t \alpha_{i,j} (a_i - a_j)
\]
where $\alpha_{i,j} \in \Z$. By rearranging the right-hand side, and since $G$ is $m$-torsion, this implies that we have a representation
\[
 x = \sum_{i}^t \beta_{i} a_i 
\]
where $0 \leq \beta_i < m$ for any $i=1, \ldots, t$ and $\sum_{i=1}^{t} \beta_i$ is a multiple of $m$. Since $0 \in mA$, we can add as many zeroes as we want and have
$x \in tm A \subset smA$, as desired.
\end{proof}

\begin{proof}[Proof of Theorem \ref{th:main-x1}] 
We use the definition (\ref{eq:defx2}). Let $A$ be a weak basis of order at most $h$ of $G$ satisfying $\langle A-A\rangle=G$. 
Let 
$$
s=\max_{1 \leq m \leq h} \Omega \left( \left| G/ m \cdot G\right| \right).
$$

Since $(h+1) A \subset G \sim \bigcup_{i=1}^h iA$, there must be some integer $n$ satisfying $1 \leq n \leq h$ such that $nA \cap (h+1)A \neq \emptyset$. 
 
Let $m = h+1-n$ and $c \in nA \cap (n+m)A$. By Lemma \ref{lem:intersection}, we have
\[
  (h-1)c \in \bigcap_{i=0}^{h-1} ((h-1)n+im)A.
\] 
Since $G \setminus \left( \bigcup_{i=1}^h i A \right)$ is finite, we conclude that
\[
  m \cdot G \setminus \left( \bigcup_{i=1}^h mi A \right) 
\] 
is also finite. It follows that
\begin{equation} \label{eq:basis1}
(h-1)c + m \cdot G \setminus ( (h-1)n + hm ) A 
\end{equation}
is finite.

On the other hand, the group $G / m \cdot G$ is finite and clearly $m$-torsion. Also, $\langle \overline{A} - \overline{A} \rangle = G / m \cdot G$, where $\overline{A}$ is the image of $A$ under the projection 
$G \rightarrow G / m \cdot G$. By Lemma \ref{lem:torsion}, we have
\[
 sm \overline{A} = G / m \cdot G.
\]
In other words, there is a system of representatives $\{x_1, \ldots, x_k\}$ of $G / m \cdot G$ in $G$ such that 
\begin{equation} \label{eq:basis2}
x_j \in sm A
\end{equation} 
for any $j=1, \ldots, k$.

For any $x \in G$, there exists $1 \leq j \leq k$ such that $x-(h-1)c - x_j \in m \cdot G$. It follows from (\ref{eq:basis1}) that for 
all but finitely many $x \in G$, we have
\begin{equation} \label{eq:basis3}
x - x_j \in ( (h-1)n + hm ) A.
\end{equation}
By writing
\[
x= x - x_j + x_j
\]
and using (\ref{eq:basis2}) and (\ref{eq:basis3}), we have
\[
 G \sim (sm + (h-1)n + hm)A.
\]
Therefore, $A$ is a basis of order at most
\[
 sm + (h-1)n + hm = sm + (h-1)(m+n) + m \leq h^2 + sh + h-1.
\]
(Recall that $m+n=h+1$.)
\end{proof}

Another remark is worth making here. 
The hypothesis of Theorem \ref{th:main-x1} is satisfied if $G/m \cdot G$ is finite for any $m$. Divisible groups 
(i.e. such that $m\cdot G=G$, for all $m\ge 1$), which include $\R$ and $\Q$, satisfy of course this property. It is easy to see that finitely generated abelian groups also satisfies this property. The group $\Z_p$ of $p$-adic integers also satisfies this property, since $\Z_p / m \cdot \Z_p \cong \Z / p^l \cdot \Z$, where $p^l$ is the highest power of $p$ in $m$. 
 Infinite groups, all of whose proper quotients are finite, (called \textit{just infinite} groups) satisfy this property. Note that $\Z_p$ is not just infinite, since $\Z_p/\Z$ is infinite.

We now turn to lower bounds and prove Theorem \ref{prop:x-lower}. In fact, we are able to ``lift'' the lower bound in \eqref{eq:res-x} (applied to $\N$) 
to more general groups simply because the basis giving this example in $\N$ is in fact a nice basis.

\begin{proof}[Proof of Theorem \ref{prop:x-lower}]
Let 
$$
g=\left[\frac{h(h+4)}{3}\right]+1
$$ 
and $k=g-1$. 
According to Theorem 20 in \cite{p2}, there exists a set $A \subset \Z / g\Z$ of two elements such that :
\begin{enumerate}[(i)]
\item $A\cup 2A \cup \cdots \cup hA=\Z/g\Z$,
\item $(k-1)A\neq \Z/g\Z$,
\item $kA=\Z/g\Z$.
\end{enumerate}
Since $\Z$ is a quotient of $G$, $\Z/ g \Z$ is also a quotient of $G$. That is, there is a subgroup $K$ of $G$ such that $G/K \cong \Z/ g \Z$.

Let $B = \{x \in G: \overline{x} \in A\}$, where $\overline{x}$ denotes the coset of $x$ in $G/K$. 
Then Lemma \ref{lem:lifting} implies that
\begin{enumerate}[(i)]
\item $B\cup 2B \cup \cdots \cup hB=G$,
\item $(k-1)B\neq G$,
\item $kB=G$.
\end{enumerate} 
In other words, $\ord^*_G(B)=k$. By (\ref{eq:defx2}), this implies that 
$$
X_{G}(h)\ge k = \left[\frac{h(h+4)}{3}\right].
$$
\end{proof}

\subsection{The torsion case} In this section, we suppose that $px=0$ for any $x\in G$, where $p$ is a prime. When $G$ is torsion, we can shorten the length of the sequence of sumsets in question, which explains the dramatically improved upper bound for $X_G(h)$.

\begin{proof}[Proof of Theorem \ref{th:main-x2} \eqref{th:main-x2a}] 
Again, we use the definition (\ref{eq:defx2}) of $X_G$. Let $A$ be any weak basis of order at most $h$ and suppose $\ord_G^*(A)=k$. Since $px=0$ for any $x\in G$, we have the inclusion $nA \subset (n+p)A$ for any $n$. Therefore $\cup_{i=h-p+1}^{h} A \sim G$. Lemma \ref{lem:whole} implies that
\begin{equation} \label{eq:union}
 \bigcup_{i=h-p+2}^{h+1} iA = G.
\end{equation}
Clearly, we also have
\begin{equation} \label{eq:union3}
 \bigcup_{i=h-p+3}^{h+2} iA = G.
\end{equation}
We now distinguish two cases.\\

\noindent \textit{Case 1:} $(h+2)A \cap nA =\emptyset$, for any $h-p+3 \leq n \leq h+1$. Then from (\ref{eq:union}) and (\ref{eq:union3}), and since $(h-p+2)A \subset (h+2)A$, we have necessarily
\[
(h-p+2)A = (h+2)A.
\]
By repeatedly adding $pA$ to both sides, we have
\[
(h-p+2)A = (h+2 + lp)A
\]
for any $l \geq 0$. If $l$ is sufficiently large then $h+2 + lp \geq k$ and $(h+2 + lp)A=G$. Therefore, $(h-p+2)A=G$ and $k \leq h-p+2 \leq h$.  

\noindent \textit{Case 2:} $nA \cap (h+2)A \neq \emptyset$ for some $h-p+3 \leq n \leq h+1$. Put $m=h+2-n$. 
We argue as in the beginning of the proof of Theorem \ref{th:main-x1}. If $c \in nA \cap (n+m)A$, then by Lemma \ref{lem:intersection}, we have
\begin{equation} \label{eq:intersection}
  (p-1)c \in \bigcap_{i=0}^{p-1} ((p-1)n+im)A.
\end{equation}
Next we claim that
\begin{equation*}
\bigcup_{i=h-p+1}^h iA \subset \bigcup_{i=0}^{p-1} (h-p+1 + im) A.
\end{equation*}
Indeed, since $(m,p)=1$, $\{im\}_{i=0}^{p-1}$ forms a complete residue system modulo $p$. If $j$ is the least nonnegative residue of $im$ modulo $p$, then $im \equiv j \pmod{p}$ and $im \geq j$, so that 
\[
 (h-p+1 + im) A \supset (h-p+1 +j)A.
\]
Therefore, 
\begin{equation*}
G \sim \bigcup_{i=0}^{p-1} (h-p+1 + im) A.
\end{equation*}
For all but finitely many $x \in G$, we have
\begin{equation} \label{eq:union2}
x - (p-1)c \in \bigcup_{i=0}^{p-1} (h-p+1 + im) A.
\end{equation}
Combining (\ref{eq:union2}) and (\ref{eq:intersection}) we see that for all but finitely many $x\in G$
\begin{align*}
 x \in & \left( (h-p+1) + (p-1)m + (p-1)n \right)A \\
 & = (h-p+1 + (p-1)(h+2)) A = (hp+p-1)A.
\end{align*}
Therefore, $\ord^*_G(A) \leq hp+p-1$.
\end{proof}
In order to find a lower bound for $X_G(h)$ we use the same idea as in Theorem \ref{prop:x-lower}, namely, to find a nice basis in a quotient of $G$. Note that $G$ is an infinite vector space over $\Fp$. 
Consequently, all finite quotients of $G$ are isomorphic to $\Fpd$, for some $d$. 
Nice weak bases of cardinality $d$ in $\Fpd$ are very well understood by the following:

\begin{lemma}
\label{lem:vs1}
Let $A = \{e_1, \ldots, e_d \} \subset \Fpd$. Then $A$ is a nice weak basis of $\Fpd$ if and only if $e_1, \ldots, e_d$ are linearly independent. If this condition is satisfied, then every element in $\Fpd$ can be expressed as a sum of $\leq (p-1)d$
elements from $A$, and $(p-1)d$ is best possible.
\end{lemma}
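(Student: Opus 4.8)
The plan is to reduce everything to the uniqueness of coordinate representations in the vector space $\Fpd$. The single observation driving both directions is that a sum of $k$ elements of $A=\{e_1,\dots,e_d\}$ is precisely an element of the form $\sum_{i=1}^d m_i e_i$ with integers $m_i\ge 0$ and $\sum_{i=1}^d m_i=k$. Thus deciding which elements are represented, and with how few summands, becomes a bookkeeping problem about nonnegative integer multiplicities reduced modulo $p$.

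First I would dispose of the easy implication. If $e_1,\dots,e_d$ are linearly dependent, then $V:=\langle e_1,\dots,e_d\rangle$ is a proper subspace of $\Fpd$; since every element of $iA$ lies in $V$ for all $i\ge 1$, the union $\bigcup_{i=1}^h iA$ is contained in $V\subsetneq\Fpd$ for every $h$, and so $A$ can never be a nice weak basis. (In a finite group the weak basis condition itself is vacuous, so niceness is the only content.)

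For the converse, suppose $e_1,\dots,e_d$ are linearly independent, hence a basis of $\Fpd$. Then every $x\in\Fpd$ has a unique expansion $x=\sum_{i=1}^d c_i e_i$ with $c_i\in\{0,1,\dots,p-1\}$, which exhibits $x$ as a sum of $\sum_{i=1}^d c_i\le (p-1)d$ elements of $A$; when $x\neq 0$ this is a genuine nonempty sum, while $0=p\,e_1$ is covered provided $p\le (p-1)d$, i.e. as soon as $d\ge 2$. Consequently $\bigcup_{i=1}^{(p-1)d} iA=\Fpd$, so $A$ is a nice weak basis and every element is a sum of at most $(p-1)d$ elements of $A$.

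Finally, to see that $(p-1)d$ is best possible I would test the extremal element $w=(p-1)(e_1+\cdots+e_d)$. If $w=\sum_{i=1}^d m_i e_i$ is any representation with $m_i\ge 0$ and $\sum_i m_i=k$, then comparing with the canonical coordinates of $w$ and using linear independence over $\Fp$ forces $m_i\equiv p-1\pmod p$, whence $m_i\ge p-1$, for every $i$; thus $k=\sum_i m_i\ge (p-1)d$, matching the upper bound. The main point — indeed the only place any care is needed — is this lower bound, where one must upgrade the uniqueness of the $\Fp$-coordinates to a genuine lower bound on the integer multiplicities $m_i$. The only remaining thing to check is the boundary behaviour at $x=0$, equivalently the degenerate case $d=1$, where $0$ genuinely costs $p$ summands and must be treated apart from the generic bound.
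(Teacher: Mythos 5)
Your proof is correct and takes essentially the same route as the paper's: the span argument for the equivalence, the canonical expansion $x=\sum_{i=1}^d c_i e_i$ with $0\le c_i\le p-1$ for the upper bound $(p-1)d$, and the element $(p-1)(e_1+\cdots+e_d)$, whose multiplicities in any representation must be $\equiv p-1 \pmod p$, for optimality. You are in fact more careful than the paper on one point: the paper's proof implicitly represents $0$ by the empty sum, while the definition of a nice weak basis requires $0\in\bigcup_{i=1}^{h}iA$ with $i\ge 1$; your observation that $0$ genuinely costs $p$ summands, so that the stated bound $(p-1)d$ really requires $d\ge 2$, is a legitimate refinement of the statement (and harmless for the paper's later uses of the lemma, where the sets concerned are translated so as to contain $0$, and $d=1$ is excluded in Lemma \ref{lem:vs3} by the condition $d\not\equiv 1\bmod p$).
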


\begin{proof}
Clearly $iA \in \langle A \rangle$ for any $i$. If $A$ is a nice weak basis, then necessarily $\langle A \rangle=\Fpd$ and consequently $e_1, \ldots, e_d$ are linearly independent. Suppose that $e_1, \ldots, e_d$ are linearly independent. For any $0 \leq \alpha_1, \ldots, \alpha_d \leq p-1$, the element $\sum_{i=1}^d \alpha_i e_i$ is a sum of $\sum_{i=1}^d \alpha_i \leq (p-1)d$ elements from $A$. Furthermore,
$\sum_{i=1}^d (p-1) e_i$ cannot be expressed as a sum of fewer than $(p-1)d$ elements from $A$.
\end{proof}
This leads us to the following characterization of nice bases of cardinality $d+1$ in $\Fpd$.
\begin{lemma} \label{lem:vs2}
Let $A=\left\{e_1,\cdots,e_d,\alpha_1e_1+\cdots+\alpha_d e_d\right\} \subset \Fpd$, where $e_1, \ldots, e_d$ are linearly independent.
Then $A$ is nice basis of $\Fpd$ if and only if 
\[
\sum_{i=1}^d\alpha_i\not\equiv 1 \bmod p. 
\]
If this condition is satisfied, then $d(p-1)A = \Fpd$ and $(d(p-1)-1)A \neq \Fpd$.
\end{lemma}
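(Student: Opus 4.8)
The plan is to work in coordinates relative to the basis $e_1,\dots,e_d$ and to track, for any representation of an element as a sum from $A$, two things: the coordinate-sum functional and the multiplicity of the extra vector $v=\sum_i\alpha_ie_i$. Write $\alpha=\alpha_1+\cdots+\alpha_d$. If an element is written using $e_i$ with multiplicity $c_i\ge 0$ and $v$ with multiplicity $c_v\ge 0$, then it equals $\sum_i(c_i+c_v\alpha_i)e_i$ and is a sum of exactly $T=c_v+\sum_i c_i$ elements of $A$. Introduce the linear functional $\sigma(x)=\sum_j x_j$; since $\sigma(e_i)=1$ and $\sigma(v)=\alpha$, every such element satisfies $\sigma\equiv T+c_v(\alpha-1)\pmod p$. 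This single congruence is the engine of the whole argument (note that over the finite group $\Fpd$ the relevant notion is that of a nice, i.e. exact, basis: $TA=\Fpd$ for some $T$).

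First I would dispose of the ``only if'' direction. If $\sum_i\alpha_i\equiv 1\pmod p$, then $\sigma\equiv T\pmod p$ for \emph{every} representation, so $TA$ is contained in the affine hyperplane $\{x:\sigma(x)\equiv T\pmod p\}$, which has only $p^{d-1}<p^d$ elements. Hence $TA\neq\Fpd$ for all $T$, and $A$ is not a nice basis.

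For the ``if'' direction I would prove the stronger quantitative claim $d(p-1)A=\Fpd$ directly. Fix a target $x$ with $\sigma(x)=s$. Since $\alpha-1\not\equiv 0$, the congruence above with $T=d(p-1)\equiv -d$ forces $c_v$ into a unique residue class modulo $p$; take $c_0\in\{0,\dots,p-1\}$ in that class, set $r_i=(x_i-c_0\alpha_i)\bmod p\in\{0,\dots,p-1\}$ and $R=\sum_i r_i$. The crucial step is the size estimate $R+c_0\le d(p-1)$, which I expect to be the only genuinely delicate point. It follows from a congruence-plus-crude-bound argument: one computes $R\equiv s-c_0\alpha\equiv -d-c_0\pmod p$, so $R+c_0+d$ is a positive multiple of $p$; since trivially $R+c_0+d\le d(p-1)+(p-1)+d=dp+p-1$, the only possibility is $R+c_0+d\le dp$, that is $R+c_0\le d(p-1)$. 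Once this is in hand, setting $c_v=c_0$ and choosing $c_i\equiv r_i\pmod p$, $c_i\ge 0$, with $\sum_i c_i=d(p-1)-c_0$ (the slack $d(p-1)-c_0-R$ being a nonnegative multiple of $p$, which I distribute among the $c_i$) realizes $x$ as a sum of exactly $d(p-1)$ elements. Everything here except the size estimate is routine bookkeeping.

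Finally, for $(d(p-1)-1)A\neq\Fpd$ I would exhibit the explicit witness $w=-\sum_i e_i-v$, whose $i$-th coordinate is $-1-\alpha_i$. In any representation of $w$, the coordinate-sum congruence at $T=d(p-1)-1\equiv -d-1$ forces $c_v\equiv -1\equiv p-1\pmod p$; and since $c_v\equiv -1$, each residue $r_i=(x_i-c_v\alpha_i)\bmod p\equiv (-1-\alpha_i)+\alpha_i\equiv p-1$, so every $c_i\ge p-1$ while also $c_v\ge p-1$. Hence the total number of summands is at least $d(p-1)+(p-1)>d(p-1)-1$, so $w\notin(d(p-1)-1)A$. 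Combining the three parts gives both the characterization and the sharp order $d(p-1)$, completing the proof.
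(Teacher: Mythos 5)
Your proof is correct, but it follows a genuinely different route from the paper's. The paper exploits translation invariance (its Lemma on translates of nice bases): it replaces $A$ by $A-e_1$, notes that $A-e_1$ is a nice basis if and only if $(A-e_1)\setminus\{0\}$ is a nice weak basis, and then invokes the characterization of nice weak bases of cardinality $d$ (Lemma on linearly independent vectors) -- the whole matter reduces to a determinant computation, which evaluates to $\sum_{i=1}^d\alpha_i-1$, and the quantitative statements $d(p-1)A=\Fpd$, $(d(p-1)-1)A\neq\Fpd$ are inherited directly from that lemma. You instead give a self-contained counting argument: tracking the coordinate-sum functional $\sigma$ modulo $p$ together with the multiplicity $c_v$ of the extra vector yields the single congruence $\sigma\equiv T+c_v(\alpha-1)\pmod p$, from which you get the hyperplane obstruction when $\alpha\equiv 1$, an explicit construction of representations of length exactly $d(p-1)$ when $\alpha\not\equiv 1$ (your size estimate $R+c_0\le d(p-1)$ is the key step, and it is correct: $R+c_0+d$ is a positive multiple of $p$ bounded by $dp+p-1$, hence at most $dp$), and an explicit witness $w=-\sum_i e_i-v$ for sharpness. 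What each approach buys: the paper's proof is shorter and recycles previously established lemmas, while yours avoids both the translation trick and the determinant, is constructive (it exhibits the representations and the extremal element rather than deducing their existence), and makes transparent \emph{why} $\sum_i\alpha_i\equiv 1$ is the obstruction, namely that all sumsets $TA$ are then trapped in affine hyperplanes of the functional $\sigma$.
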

\begin{proof} We make the simple yet crucial observation that $A$ is a nice basis if and only if $A-a$ is a nice basis (Lemma \ref{lem:translate}). (Note that this property fails for \textit{vector space bases}.) We have 
\[
A-e_1=\left\{0,e_2-e_1,\cdots,e_d-e_1,(\alpha_1-1)e_1+\cdots+\alpha_de_d\right\}.
\]
Clearly, $(A-e_1)$ is a nice basis if and only if $(A-e_1)\setminus\left\{0\right\}$ is a nice weak basis. By Lemma \ref{lem:vs1}, we only need to check when $(A-e_1)\setminus\left\{0\right\}$ is a family of $d$ independent vectors. In the vector space basis $\{e_1, \ldots, e_d \}$, we have 
\begin{eqnarray*}
\det(\left\{e_2-e_1,\cdots,e_d-e_1,(\alpha_1-1)e_1+\cdots+\alpha_de_d\right\})&&\\
 & \hspace*{-4cm} =&  \hspace*{-2cm} 
\left|
\begin{array}{rrrrrrc}
-1& 	-1& 	\cdots 	&\cdots 	& -1	&-1 	&\alpha_1-1\\
 1& 	0 & 	\cdots	&\cdots 	& 0	&0 	&\alpha_2\\ 
 0& 	1 & 	0 	&	\cdots  &0&0 & \alpha_3\\ 
 0&  0 &  1 &\ddots &\cdots &  \vdots & \vdots\\
\vdots& \vdots  & \ddots &\ddots&\ddots& \vdots &\vdots \\ 
\vdots &  \vdots          &            & \ddots & 1 & 0& \vdots\\
0& 0& \cdots&\cdots&0& 1&\alpha_d
\end{array}\right| \\
& \hspace*{-4cm}= &  \hspace*{-2cm}  \sum_{i=1}^d\alpha_i-1,
\end{eqnarray*}
and the first part of Lemma \ref{lem:vs2} follows. The second part of Lemma \ref{lem:vs2} follows from the second part of Lemma \ref{lem:vs1}.
\end{proof}
We can now construct a nice basis in $\Fpd$ which plays a similar role to the set $A$ used in the proof of Theorem \ref{prop:x-lower}.

\begin{lemma}\label{lem:vs3}
Let $e_1,\cdots,e_d$ be $d$ linearly independent vectors in $\Fpd$. Suppose that $d\not\equiv 1\bmod p$.
Then the set
\[
A=\left\{e_1,\cdots,e_d,e_1+\cdots+e_d\right\}
\]
satisfies the following properties:
\begin{enumerate}[(i)]
\item any element in $\Fpd$ can be expressed as a sum of at most $(d+1)(p-1)/2$ elements from $A$,
\item $(d(p-1)-1)A \neq \Fpd$,
\item $d(p-1)A = \Fpd$.
\end{enumerate}
\end{lemma}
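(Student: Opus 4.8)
The plan is to obtain parts (ii) and (iii) immediately from Lemma \ref{lem:vs2}, and to prove part (i) by a short averaging argument.

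First I would observe that the set $A$ is exactly the set appearing in Lemma \ref{lem:vs2} in the special case $\alpha_1 = \cdots = \alpha_d = 1$, so that $\sum_{i=1}^d \alpha_i = d$. The hypothesis $d \not\equiv 1 \pmod p$ is then precisely the condition of Lemma \ref{lem:vs2} guaranteeing that $A$ is a nice basis of $\Fpd$. Consequently the second assertion of that lemma yields at once $d(p-1)A = \Fpd$ and $(d(p-1)-1)A \neq \Fpd$, which are exactly statements (iii) and (ii). No extra work is needed for these two.

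The substance of the proof lies in (i). Write $f = e_1 + \cdots + e_d$ for the last element of $A$, and fix $x \in \Fpd$, expressed as $x = \sum_{i=1}^d x_i e_i$ with each $x_i \in \{0, 1, \ldots, p-1\}$. The idea is to let the generator $f$ absorb a common shift of all the coordinates. If we decide to use $f$ exactly $c$ times, for some $c \in \{0, 1, \ldots, p-1\}$, then $x - cf = \sum_{i=1}^d (x_i - c) e_i$, and expressing each term $(x_i - c)e_i$ through the least nonnegative residue of $x_i - c$ modulo $p$ writes $x$ as a sum of exactly
\[
N(c) := c + \sum_{i=1}^d \bigl( (x_i - c) \bmod p \bigr)
\]
elements of $A$, where a vanishing residue contributes no copy of $e_i$. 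I would then average $N(c)$ over $c$: for each fixed $i$, as $c$ runs through $\{0, 1, \ldots, p-1\}$ the quantity $(x_i - c) \bmod p$ runs through a complete residue system, so it sums to $0 + 1 + \cdots + (p-1) = p(p-1)/2$. Hence
\[
\sum_{c=0}^{p-1} N(c) = \frac{p(p-1)}{2} + d \cdot \frac{p(p-1)}{2} = (d+1)\frac{p(p-1)}{2},
\]
so that the average value of $N(c)$ is $(d+1)(p-1)/2$. Therefore some $c$ satisfies $N(c) \le (d+1)(p-1)/2$, and for that $c$ the element $x$ is a sum of at most $(d+1)(p-1)/2$ elements of $A$, which is (i).

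I do not expect a genuine obstacle. Parts (ii) and (iii) are a direct specialization of Lemma \ref{lem:vs2}, and the only mildly delicate point in (i) is verifying that the summand count $N(c)$ is recorded correctly, together with the observation that, since $N(c)$ is an integer whereas the average $(d+1)(p-1)/2$ can be a half-integer (namely when $p = 2$ and $d$ is even), the bound in (i) is to be read as the integer $\lfloor (d+1)(p-1)/2 \rfloor$; the averaging argument delivers exactly this value.
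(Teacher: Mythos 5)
Your proposal is correct and follows essentially the same route as the paper: parts (ii) and (iii) are read off from Lemma \ref{lem:vs2} with $\alpha_1=\cdots=\alpha_d=1$, and part (i) is the same averaging-over-shifts argument, with your direct sum $\sum_{c} N(c)$ matching the paper's bookkeeping via the residue counts $\alpha_i$. Your closing remark that the integer minimum is in fact at most $\lfloor (d+1)(p-1)/2\rfloor$ is a harmless (and correct) refinement of the stated bound.
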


\begin{proof}
The last two assertions follow directly from Lemma \ref{lem:vs2} and the assumption that $d\not\equiv 1\bmod p$. 

As for the first one, put $a = \sum_{i=1}^d e_i$.
Consider an arbitrary element $x=x_1 e_1+\cdots+x_d e_d \in \Fpd$. Define 
\[
\alpha_i=\left|\left\{x_j\equiv i\bmod p\right\}\right|.
\]
For all $0\le i\le p-1$, we can write 
\[
x=ia+\sum_{j=1}^d(x_j-i)e_j=ia+\sum_{j=1}^dy_je_i
\]
with $0\le y_j\le p-1$. In this decomposition of $x$, we use $i+\alpha_{i+1}+2\alpha_{i+2}+\cdots+(p-1)\alpha_{i+p-1}$ elements of $A$. Thus, $x$ can be written using $$\min_i\left\{i+\alpha_{i+1}+2\alpha_{i+2}+\cdots+(p-1)\alpha_{i+p-1}\right\}$$ elements of $A$.

Since $$\begin{aligned}\sum_{i=0}^{p-1}\left(i+\alpha_{i+1}+2\alpha_{i+2}+\cdots+(p-1)\alpha_{i+p-1}\right)&=\left(\sum_{i=0}^{p-1}\alpha_i\right)\frac{p(p-1)}{2}+\frac{p(p-1)}{2}\\
&=\left(d+1\right)\frac{p(p-1)}{2}\end{aligned}$$
the minimum among the $i$'s is at most $(d+1)(p-1)/2$, which proves the first assertion of Lemma \ref{lem:vs3}.
\end{proof}

\begin{proof}[Proof of Theorem \ref{th:main-x2} (\ref{th:main-x2b})]
If 
$\left[2h/(p-1)-1\right]\not\equiv 1\bmod p$, 
let 
$$
d=\left[\frac{2h}{p-1}-1\right].
$$ 
If not, we choose 
$$
d=\left[\frac{2h}{p-1}-2\right].
$$ 
Since $h \geq 3(p-1)/2$, we have $d \geq 1$. 

We now proceed as in the proof of Theorem \ref{prop:x-lower}. 
Let $A \subset \Fpd$ be the set given by Lemma \ref{lem:vs3}. Since $(d+1)(p-1)/2 \leq h$, we have
\begin{enumerate}[(i)]
\item $A\cup 2A \cup \cdots \cup hA=\Fpd$,
\item $(d(p-1)-1)A \neq \Fpd$,
\item $d(p-1)A = \Fpd$.
\end{enumerate}
There is a subgroup $K$ of $G$ such that $G/K \cong \Fpd$. Let $B = \{x \in G: \overline{x} \in A\}$, where $\overline{x}$ denotes the coset of $x$ in $G/K$. 
Lemma \ref{lem:lifting} implies that
\begin{enumerate}[(i)]
\item $B\cup 2B \cup \cdots \cup hB=G$,
\item $(d(p-1)-1)B\neq G$,
\item $d(p-1)B=G$.
\end{enumerate} 
This implies that 
$$
X_G(h) \geq \ord_G^{*}(B) = d(p-1) \geq (p-1) \left( \frac{2h}{p-1}-3 \right) = 2h -3p+3.
$$
\end{proof}

We notice that the proof shows that we have a better bound $X_G(h) \geq 2h-2p+2$ in the case $d=\left[2h/(p-1)-1\right]\not\equiv 1\bmod p$. 
Also, if $p=2$, then Theorem \ref{th:main-x2} (\ref{th:main-x2a}) 
 and the proof of Theorem \ref{th:main-x2} (\ref{th:main-x2b}) imply the rather tight bounds, namely 
 \[
  2h-2 \leq X_{\F_2[t]}(h) \leq 2h+1.
 \]
It is perhaps of interest to determine the exact value of $X_{G}(h)$ when $p=2$.

\subsection{When $h$ is small.} In this section we prove Theorem \ref{th:main-x3}. Note that the lower bounds $X_G(2) \geq 3$ and $X_G(3) \geq 4$ are immediate consequences of Theorem \ref{th:main-minimal}. 
In proving the upper bounds, we again use the definition (\ref{eq:defx2}) of $X_G$.

\begin{proof}[Proof of Theorem \ref{th:main-x3}(\ref{th:main-x3a})]
Suppose $$A\cup2A\sim G$$ and $\ord_G^*(A)=k$ is finite. By Lemma \ref{lem:whole}, for every $l\ge2$, we have $$lA\cup(l+1)A=G.$$

\noindent \textit{Case 1:} There exists $c$ in $2A\cap3A$. For all but finitely many $x\in G$, we have $x-c\in A\cup2A$. Thus for all but but finitely many $x \in G$, $x=x-c+c \in 4A$, and $4A \sim G$.

\noindent \textit{Case 2:}  $2A\cap3A=\emptyset$. If there exists $c\in 3A\cap4A$, then by the same argument as above, we have $5A\sim G$. 
Let us assume that $3A\cap4A=\emptyset$. Since $2A\cup3A=3A\cup4A=G$, we deduce $2A=4A$. It follows that $2A=2mA$ for all $m\ge1$. If $2m>k$ then $2mA=G$ and $2A=G$.

In any case we have $\ord_G^*(A) \leq 5$. 
\end{proof}

\begin{proof}[Proof of Theorem \ref{th:main-x3}(\ref{th:main-x3b})]
Suppose $$A\cup2A \cup 3A \sim G$$ and $\ord_G^*(A)=k$ is finite. By Lemma \ref{lem:whole}, for every $l\ge2$, we have $$lA\cup(l+1)A\cup(l+2)A=G.$$

Observe that if $iA\cap(i+1)A\neq\emptyset$, then $2iA\cap(2i+1)A\cap(2i+2)A\neq\emptyset$ and this implies $(2i+3)A\sim G$. Thus we can assume that $iA\cap(i+1)A=\emptyset$ for $1\le i\le 7$. Otherwise, $\ord^*(A)\le 2i+3\le17$.
We distinguish three cases.

\noindent \textit{Case 1:} $0\in 2A$. Then $4A\supset2A$ and $5A\supset3A$. It follows that $3A\cup4A=G=4A\cup5A$. By assumption, these are partitions of $G$. Therefore, $3A=5A$, which implies $3A=(2m+3)A$ for any $m \geq 1$.
Consequently, $3A=G$ and $\ord^*_G(A) \leq 3$.

\noindent \textit{Case 2:} $0\in 3A$. Then $5A\supset2A$ and $6A\supset3A$. Since $5A\cap6A=\emptyset$, $5A\cap3A=\emptyset$. Thus, $3A\cup4A\cup5A=G$ is a partition of $G$. On the other hand, since $2A\cup3A\cup4A= G$, we deduce that $2A=5A$
Similarly to the previous case, we have $\ord^*_G(A) \leq 2$.

\noindent \textit{Case 3:} $0\in 4A.$ Then $6A\supset2A$, $7A\supset3A$, $8A\supset4A$, and $2A\cup3A\cup4A=G=6A\cup7A\cup8A=G$. Since $7A$ is disjoint from $6A$ and $8A$, we deduce $3A=7A$. 
Similarly to the previous case, we have $\ord^*_G(A) \leq 3$.
\end{proof}

\section{The function $S_G$} \label{sec:s}
The key in generalizing Cassaigne and Plagne's argument \cite{cp} is the notion of \textit{amenability}. Among the many equivalent definitions of amenability, we work with the one defined in terms of \textit{invariant means}. 
Let $G$ be a discrete (not necessarily abelian) group. Let $l^{\infty}(G)$ denote the set of all bounded functions on $G$. 
A right-invariant mean on $G$ is a linear functional $\Lambda: l^{\infty}(G) \rightarrow \R$ satisfying:
\begin{enumerate}[(i)]
 \item $\Lambda$ is nonnegative: if $f \geq 0$ on $G$, then $\Lambda(f) \geq 0$,
 \item $\Lambda$ has norm 1: $\Lambda(1_G)=1$ where $1_G$ is the characteristic function of $G$,
 \item $\Lambda$ is right-invariant: $\Lambda( \tau_g f) = \Lambda( f)$ for any $f \in l^{\infty}(G)$ and $g \in G$, where $\tau_g$ is the right translation: $\tau_g f(x) = f(xg)$.
\end{enumerate}
$G$ is called \textit{amenable} if there exists a right-invariant mean on $G$.

We recall here some standard facts about amenable groups. For a reference, see for example \cite[Appendix G]{bhv}. 
The additive group of the integers $\Z$ is amenable. The existence of invariant means on $\Z$ is non-constructive, since it requires either the use of ultrafilters or the Hahn-Banach theorem. More generally, any discrete abelian group is amenable. The free subgroups on two generators is not amenable.

\begin{proof}[Proof of Theorem \ref{th:main-s1}] The lower bound $h+1 \leq S_G(h)$ is an immediate consequence of Theorem \ref{th:main-minimal}. We will now prove the upper bound.
Let $\Lambda$ be an invariant mean on $G$. Since $G$ is infinite, it is easy to see that $\Lambda(1_I) = 0$ for all finite subset $I \subset G$, where $1_I$ is the characteristic function of $I$ (it suffices to see this for a singleton). 

Let $A$ be a basis of order $h$ of $G$. Without loss of generality, we may assume that $0 \in A$.

 For each element $a \in A$, let $f_a$ be the function on $G$ defined by
 \[
f_a(x)=
\left\{
  \begin{array}{ll}
    1, & \hbox{if } x \in hA \setminus h \left( A \setminus \{a\} \right) \\
    0, & \hbox{otherwise.}
  \end{array}
\right.
\]
In other words, $f_a(x)=1$ if and only if $a$ is essential in all representations of $x$ as a sum of $h$ elements from $A$. 

Just like in the proof given in \cite{cp}, we make two observations. First, for any $x \in G$ and finite subset $I \subset A$, we have $\sum_{a \in I} f_a(x) \leq h$. 
Indeed, if $x \not \in hA$ clearly then $f_a(x)=0$ for any $a \in A$.
Suppose $x \in hA$. Fix a representation
\[
 x = a_1 +\cdots + a_h
\]
where $a_i \in A$. Then $f_a(x)$ can only be 1 if $a$ is one of the $a_i$, and there are at most $h$ of these. 

Applying $\Lambda$ to both sides, we have proved the following statement.

\begin{claim} \label{claim:sum}
 For any finite subset $I \subset A$, we have 
\[
\sum_{a \in I} \Lambda (f_a) \leq h.  
\]
\end{claim}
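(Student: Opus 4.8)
The plan is to reduce the claim to the single pointwise inequality $\sum_{a \in I} f_a(x) \le h$, valid for every $x \in G$, and then to push this inequality through the mean $\Lambda$ using only its defining properties. Since $I$ is finite, the function $\sum_{a \in I} f_a$ is bounded, so $\Lambda$ is defined on it and additive over $I$; this reduces the whole statement to controlling the sum pointwise.

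First I would establish the pointwise bound. If $x \notin hA$, then $x$ has no representation as a sum of $h$ elements of $A$, so $f_a(x) = 0$ for every $a$ and the bound is trivial. If $x \in hA$, fix one representation $x = a_1 + \cdots + a_h$ with each $a_i \in A$. The key observation is that $f_a(x) = 1$ forces $a$ to occur among the $a_i$: indeed, if $a$ were distinct from all of $a_1, \ldots, a_h$, then this very expression would display $x$ as a sum of $h$ elements of $A \setminus \{a\}$, i.e. $x \in h(A \setminus \{a\})$, contradicting $f_a(x) = 1$ (which by definition requires $x \in hA \setminus h(A \setminus \{a\})$). Hence $\{a \in I : f_a(x) = 1\} \subset \{a_1, \ldots, a_h\}$, a set of at most $h$ elements, so $\sum_{a \in I} f_a(x) \le h$.

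Next I would apply $\Lambda$. By linearity over the finite index set $I$, we have $\Lambda(\sum_{a \in I} f_a) = \sum_{a \in I} \Lambda(f_a)$. Since $h \cdot 1_G - \sum_{a \in I} f_a \ge 0$ pointwise by the previous step, nonnegativity together with linearity yields $\Lambda(\sum_{a \in I} f_a) \le h\,\Lambda(1_G) = h$, using the normalization $\Lambda(1_G) = 1$. Combining the two identities gives $\sum_{a \in I} \Lambda(f_a) \le h$, which is the claim.

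The entire substance lies in the pointwise step; the mean-theoretic part is a formal manipulation of the axioms. The only point requiring care is the logical reading of $f_a(x) = 1$ as membership in $hA \setminus h(A \setminus \{a\})$: one must note that $x \in h(A \setminus \{a\})$ is exactly the existence of some representation of $x$ avoiding $a$, so that $f_a(x) = 1$ says precisely that $a$ is essential in \emph{every} representation of $x$. I would emphasize that right-invariance of $\Lambda$ plays no role in this particular claim; only nonnegativity, normalization, and linearity are used here, with invariance reserved for the later steps of the proof of Theorem \ref{th:main-s1}.
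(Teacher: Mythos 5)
Your proof is correct and follows essentially the same route as the paper: establish the pointwise bound $\sum_{a \in I} f_a(x) \le h$ by fixing a representation $x = a_1 + \cdots + a_h$ and noting that $f_a(x)=1$ forces $a$ to be one of the $a_i$, then apply $\Lambda$ using linearity, nonnegativity, and normalization. Your write-up merely spells out details the paper leaves implicit (the contrapositive reading of $f_a(x)=1$ and the comparison with $h\cdot 1_G$), so there is nothing to add.
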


Our next claim is this :

\begin{claim} \label{claim:rep}
If $a \in A$ is such that $\Lambda(f_a) < 1/h$, then there exists $x \in G$ such that
\[
  x + ia \in h (A \setminus \{ a\})
\]
for any $i=0, 1, \ldots, h-1$.
\end{claim}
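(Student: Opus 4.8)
The plan is to show that the set of ``good'' $x$ is nonempty by an averaging argument against the invariant mean $\Lambda$. First I would reformulate the target condition. Since $h(A\setminus\{a\}) \subset hA$ and the support of $f_a$ is exactly $hA \setminus h(A\setminus\{a\})$, we have the disjoint decomposition $hA = h(A\setminus\{a\}) \sqcup \{x : f_a(x)=1\}$. Hence $x+ia \in h(A\setminus\{a\})$ holds if and only if both $x+ia \in hA$ and $f_a(x+ia)=0$. Because $A$ is a basis of order $h$, the set $F := G \setminus hA$ is finite, so the membership $x+ia \in hA$ is equivalent to $x+ia \notin F$.

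With this reformulation, an $x$ satisfying the claim is precisely one for which $f_a(x+ia)=0$ and $x+ia \notin F$ for every $i=0,1,\ldots,h-1$. I would encode the failure of these conditions in a single nonnegative, integer-valued function
\[
\Phi(x) = \sum_{i=0}^{h-1}\bigl( \tau_{ia} f_a(x) + \tau_{ia} 1_F(x)\bigr) = \sum_{i=0}^{h-1}\bigl( f_a(x+ia) + 1_F(x+ia)\bigr),
\]
so that $\Phi(x)=0$ exactly when $x$ works. The heart of the argument is to apply $\Lambda$ to $\Phi$. By linearity, right-invariance (so that $\Lambda(\tau_{ia} f_a) = \Lambda(f_a)$ and likewise for $1_F$), and the fact established at the start of the proof that $\Lambda$ annihilates the indicator of any finite set (hence $\Lambda(1_F)=0$), I obtain
\[
\Lambda(\Phi) = h\,\Lambda(f_a) + h\cdot 0 = h\,\Lambda(f_a) < 1,
\]
using the hypothesis $\Lambda(f_a) < 1/h$.

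To conclude, I would argue by contradiction. If no suitable $x$ existed, then $\Phi(x) \geq 1$ for all $x \in G$, i.e.\ $\Phi \geq 1_G$ pointwise; by nonnegativity of $\Lambda$ this forces $\Lambda(\Phi) \geq \Lambda(1_G) = 1$, contradicting the bound just obtained. Hence some $x$ satisfies $\Phi(x)=0$, which by the reformulation yields $x+ia \in h(A\setminus\{a\})$ for all $i=0,\ldots,h-1$, as required.

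The routine parts (linearity and invariance of $\Lambda$) are immediate from its defining properties; the only point requiring genuine care is the bookkeeping that justifies the reformulation, namely remembering to carry the finite correction term $1_F$ so that the vanishing $f_a(x+ia)=0$ really places $x+ia$ in $h(A\setminus\{a\})$ rather than merely outside the support of $f_a$. I expect this to be the one place where a careless argument could go astray.
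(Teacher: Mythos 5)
Your proof is correct and takes essentially the same route as the paper: both arguments apply $\Lambda$ to $\sum_{i=0}^{h-1}\tau_{ia}f_a$, use right-invariance to obtain the bound $h\Lambda(f_a)<1$, and then use nonnegativity, $\Lambda(1_G)=1$, and the vanishing of $\Lambda$ on indicators of finite sets to produce the desired $x$. The only difference is bookkeeping order: you fold the finite exceptional set $F=G\setminus hA$ into the averaged function via the terms $1_F(x+ia)$, whereas the paper first extracts an infinite set $B$ on which $\sum_{i}f_a(x+ia)=0$ and afterwards picks $x\in B$ avoiding the finitely many translates of $F$.
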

Indeed, since $\Lambda$ is translation invariant, we have
\[
1 > h \Lambda(f_a) = \sum_{i=0}^{h-1} \Lambda \left( \tau_{ia} f_a \right) = \Lambda \left( \sum_{i=0}^{h-1} \tau_{ia}f_a \right).
\]
It follows that for an infinite set $B \subset G$, we have
\[
 1 > \sum_{i=0}^{h-1} \tau_{ia}f_a(x) = \sum_{i=0}^{h-1} f_a(x+ia).
\]
for all $x \in B$. Consequently, for any $x \in B$, $f_a(x+ia) = 0$ for any $i=0,1, \ldots, h-1$.

Since $hA \sim G$, there must exist such an $x \in B$ such that $x+ia \in hA$ for any $i=0,1, \ldots, h-1$. For this $x$, we have
\[
  x + ia \in h (A \setminus \{ a\})
\]
for any $i=0, 1, \ldots, h-1$, as required.

From Claim \ref{claim:sum} it follows that for all but finitely many $a \in A$, we have $a \neq 0$ and $\Lambda(f_a) < 1/h$. 
For such an $a$, let 
$x$ be such that $x + ia \in h (A \setminus \{ a\})$ for any $i=0, 1, \ldots, h-1$, whose existence is given by Claim \ref{claim:rep}. 
Now for all but finitely many $y \in G$, we have $y-x \in hA$ and $y-x \neq ha$. By removing any occurrence of $a$, it follows that for some $0 \leq i \leq h-1$, we have
\[
 y-x-ia \in (h-i) (A \setminus \{a\}).
\]
This implies that
\[
 y = (y-x-ia) + (x+ia) \in (2h-i) (A \setminus \{a\}) \subset 2h(A \setminus \{a\})
\]
which proves that $A \setminus \{a\}$ is a basis of order at most $2h$.
\end{proof}

Notice that the definition of amenability can be extended to locally compact groups. The definition is the same, except that we replace $l^\infty(G)$ by $L^\infty(G, \mu)$ where $\mu$ is a Haar measure on $G$. 
 Again, it is known that all abelian locally compact groups are amenable. The argument above can be applied to these groups as well, with an appropriate change in the definition of order. 
 Instead of requiring $hA \sim G$, we require $\mu(G \setminus hA) =0$. 

\begin{proof}[Proof of Theorem \ref{th:main-s2}] Since $S_G(2) \geq 3$, it suffices to show that $S_G(2) \leq 3$. Let $A$ be a basis of order at most $2$ of $G$. 
Call $b \in A$ \textit{bad} if $\ord^*(A \setminus \{ b\}) \geq 4$ and \textit{good} otherwise. We will show that $A$ has only finitely many bad elements.

By considering $A-c$ instead of $A$ where $c$ is any element of $A$, we may assume that $0 \in A$.

We first examine properties of a bad element $b \in A, b \neq 0$. Let us write $A_b=A \setminus \{b\}$. Since $A$ is a basis of order 2, we have
\begin{equation} \label{eq:u1}
 G \sim 2A \sim 2 A_b \cup \left( A_b + b \right).
\end{equation}
Let $a$ be an arbitrary element of $A_b$. Then
\begin{equation} \label{eq:u2}
 G \sim 2A + a \sim \left( 2A_b + a \right) \cup \left( A_b + b + a \right) \subset  3 A_b \cup \left( A_b + b + a\right).
\end{equation}
From (\ref{eq:u1}) we also deduce
\begin{equation} \label{eq:u3}
 G \sim 2 A_b \cup \left( A_b + b \right) \subset 3 A_b \cup (A_b + b).
\end{equation}
From (\ref{eq:u2}) and (\ref{eq:u3}) we see that the sets $(A_b + b + a)$ and $(A_b + b)$ both contain all but finitely many elements of $G \setminus 3A_b$. 
Since $b$ is bad, this implies that $(A_b + b + a) \cap (A_b + b)$ is infinite. 
In other words, we have proved:

\noindent \textbf{Claim 1:} For any $a \in A_b$, $(A_b + a) \cap A_b$ is infinite.

Next we prove
 
\noindent \textbf{Claim 2:} $(A_b + b) \cap A_b = \emptyset$.

Indeed, suppose for a contradiction that there are $a_1, a_2 \in A_b$ such that $b+a_1 =a_2$. For all but finitely many $x \in A$, we have $x-a_1 \in 2A \setminus \{2b\}$. 
If $x-a_1 \in 2 A_b$ then $x \in 3A_b$. If $x-a_1 \in A_b + b$ then $x \in A_b + a_2 \subset 2A_b \subset 3 A_b$. Thus $3A_b \sim G$, a contradiction.

Suppose now that there is another bad element $b' \in A, b' \neq 0$. From Claim 1 we know that $(A_b + b')\cap A_b$ is infinite. 
Therefore, $ \left( A \setminus \{b,b'\} + b' \right) \cap \left( A \setminus \{b,b'\} \right)$ is infinite. But this contradicts Claim 2 (with $b$ replaced by $b'$).
\end{proof}

In fact, the proof shows that there is at most one bad element in $A$. Indeed, the above argument shows that for any $c \in A$, there is at most one bad element in $A$ that is different from $c$. 
Applying this observation to a good element $c$ (which we know to exist), this implies that there is at most one bad element in $A$.

\section*{Acknowledgements}
The authors are supported by the ANR grant C\ae sar, number ANR 12-BS01-0011. The second author is supported by the Fondation Math\'ematique Jacques Hadamard. We would like to thank P. Longobardi and M. Maj for a useful discussion.

\end{document}